\newtheorem{theorem}{\bf Theorem}[section]
\newenvironment{proof}{{\noindent \bf \em Proof:}}{\hfill$\square$}
\renewcommand{\top}{\text{T}}
\def\T{\mathfrak{T}}
\def\vs{\vspace{0.2cm}}
\def\P{\mathcal{P}}
\def\M{\mathcal{M}}
\def\R{\mathcal{R}}
\def\r{\bm r}
\newcommand{\norm}[1]{\left \lVert #1 \right \rVert }
\newcommand{\Plm}{\Psi_{\le m}}
\newcommand{\Pgm}{\Psi_{> m}}
\newcommand{\Llm}{\Lambda_{\le m}}
\newcommand{\Lgm}{\Lambda_{> m}}
\title{Inexact subspace projection methods for low-rank tensor eigenvalue problems} 
\begin{document}
\begin{frontmatter}

\author[lbnl]{Alec Dektor\corref{correspondingAuthor}}
\ead{adektor@lbl.gov}

\author[vt]{Peter DelMastro}

\author[lbnl]{Erika Ye}
\author[lbnl]{Roel Van Beeumen}
\author[lbnl]{Chao Yang}

\address[lbnl]{Applied Mathematics and Computational Research Division,
  Lawrence Berkeley National Laboratory, Berkeley, CA}

\address[vt]{Department of Mathematics, Virginia Tech, Blacksburg, VA}

\cortext[correspondingAuthor]{Corresponding author}

\journal{Arxiv}
\date{}

\begin{abstract}
We propose inexact subspace iteration for solving high-dimensional eigenvalue problems with low-rank structure. Inexactness stems from low-rank compression, enabling efficient representation of high-dimensional vectors in a low-rank tensor format. A primary challenge in these methods is that standard operations, such as matrix-vector products and linear combinations, increase tensor rank, necessitating rank truncation and hence approximation. We compare the proposed methods with an existing inexact Lanczos method with low-rank compression. This method constructs an approximate orthonormal Krylov basis, which is often difficult to represent accurately in low-rank tensor formats, even when the eigenvectors themselves exhibit low-rank structure. In contrast, inexact subspace iteration uses approximate eigenvectors (Ritz vectors) directly as a subspace basis, bypassing the need for an orthonormal Krylov basis. Our analysis and numerical experiments demonstrate that inexact subspace iteration is much more robust to rank-truncation errors compared to the inexact Lanczos method. We also demonstrate that rank-truncated subspace iteration can converge for problems where the DMRG method stagnates. Furthermore, the proposed subspace iteration methods do not require a Hermitian matrix, in contrast to Lanczos and DMRG, which are designed specifically for Hermitian matrices.
\end{abstract}



\end{frontmatter}

\section{Introduction}
\label{sec:Intro}

We are interested in computing several extreme eigenpairs 
\begin{equation} \label{eq:eigen}
A \psi = \lambda \psi, 
\end{equation}
of a matrix $A \in \mathbb{C}^{(n_1 \cdots n_d) \times (n_1 \cdots n_d)}$ that can be written as a sum of Kronecker products 
\begin{equation} \label{eq:sum_of_krons}
    A = \sum_{j = 1}^{r_A} A_1^{(j)} \otimes A_2^{(j)} \otimes \cdots \otimes A_d^{(j)}, 
\end{equation} 
where $r_A$ is the separation (or canonical polyadic) rank of $A$ and $A_{i}^{(j)} \in \mathbb{C}^{n_i \times n_i}$ for all $i = 1,2,\ldots,d$ and $j = 1,\ldots,r_A$. Examples of \eqref{eq:sum_of_krons} appear naturally in quantum many-body physics and arise in many other applications from the discretization of partial differential operators on separable domains. The number of entries in $A$ and its eigenvectors $\psi \in \mathbb{C}^{n_1 \cdots n_d}$ grows exponentially fast with the parameter $d$, making classical numerical methods too expensive when directly applied to $A$, even for moderate $d$. Given the tensor product structure of the operator \eqref{eq:sum_of_krons}, it is natural to seek eigenvectors in a compact form commensurate with the tensor product structure of the operator $A$. Many tensor networks (or formats) are available for representing operators and vectors \cite{Grasedyck2013}, offering significant reductions in the degrees of freedom required, provided the objects can be approximated with a network of relatively small tensor rank. For quantum many-body Hamiltonians with separated eigenvalues and local interactions on 1D geometries, it is well-known that the eigenvectors corresponding to the smallest eigenvalues admit accurate low-rank tensor train (or matrix product state) approximations, thanks to the area law \cite{Hastings2007}. For other applications, e.g., the spatial discretization of high-dimensional elliptic operators, it has been demonstrated that eigenvectors can be well-approximated in low-rank hierarchical tensor formats \cite{ballani2013projection,hackbusch2012}. 

A widely used approach for computing the extreme eigenpairs of a Hermitian matrix $A$, expressed in the form \eqref{eq:sum_of_krons}, is the density matrix renormalization group (DMRG) method \cite{Schollwock2005,Schollwock2011,White1993}, originally developed in the physics community. The DMRG method represents the approximate eigenvector as a tensor network and optimizes one (or two) low-dimensional tensors in the network at a time by projecting $A$ onto subspaces defined by the remaining tensors. This optimization process cycles through all tensors in the network, often multiple times, until convergence. The method achieves computational efficiency by reducing the high-dimensional eigenvalue problem to a sequence of low-dimensional eigenvalue computations. However, the alternating optimization framework of DMRG is prone to convergence to local minima for certain types of systems \cite{White2005}. 
To address this issue, several variations of the DMRG method have been proposed, such as subspace enrichment techniques that increase the rank during optimization \cite{Dolgov2015}. Despite these enhancements, the DMRG algorithm and its variants remain unreliable for certain systems. Furthermore, the original DMRG formulation was designed to compute only the lowest eigenpair of $A$. Extensions for computing multiple eigenpairs often rely on sequential deflation, where eigenvectors are computed one at a time, or on block low-rank representations \cite{dolgov2014,kressner2014,HuChan2015,MBLExcitedDMRG}. In our experience, the latter can require higher ranks than are needed to represent the eigenvectors individually, leading to inefficiencies. Also, DMRG is based on an energy minimization principle that requires $A$ to be Hermitian. This requirement is not imposed by the methods proposed in this paper.

We propose an inexact polynomial-filtered subspace iteration for computing several extreme eigenpairs of $A$. We employ a low-rank tensor representation of $A$, such as the CP representation in \eqref{eq:sum_of_krons} or TT-matrix representation, and represent all vectors in a compatible low-rank tensor format. A key challenge in applying this iterative method is that standard operations, such as matrix-vector products and vector addition, often increase tensor rank, necessitating truncation that introduces potentially large errors. 
We provide direct comparisons of the proposed subspace iteration with low-rank variants of Krylov methods for symmetric matrices, i.e., Lanczos method, proposed in \cite{MPS_Lanczos,kressner2010krylov,kressner2011,Palitta2021}. These methods construct an approximate orthonormal basis of a Krylov subspace, which we show are often difficult to represent accurately using low-rank tensor formats, even when the eigenvectors themselves have low-rank structure. We further show that the truncation errors introduced while constructing the approximate Krylov subspace result in a significant loss of accuracy in the Krylov method. In contrast, polynomial-filtered subspace iteration directly uses approximate eigenvectors (Ritz vectors) as a subspace basis, avoiding the need for generating an orthonormal basis for a Krylov subspace. Through analysis and numerical experiments, we demonstrate that subspace iteration is much more robust to rank-truncation errors than the Lanczos method. To the best of our knowledge, the present paper is the first study of polynomial-filtered subspace iteration with low-rank tensor compression. Approximate power iteration using sparse matrix and vector representations has been studied in \cite{lu2020,yuan2013truncated}. 

The paper is organized as follows. In \cref{sec:tensors}, we briefly review the tensor train (TT) representation for high-dimensional tensors and basic operations involving TTs. 
In \cref{sec:chebfsi}, we introduce tensor train Chebyshev filtered subspace iteration with rank-truncation. In \cref{sec:convergence} we provide a convergence analysis that gives an upper bound on the amount of acceptable truncation error in TT subspace iteration. 
In \cref{sec:cost_and_imp} we discuss the computational cost of TT subspace iteration and different algorithms for performing rank-truncation. Finally, in \cref{sec:numerics}, we provide numerical comparisons between the proposed subspace iteration methods and Lanczos and DMRG, provide numerical demonstrations of our theoretical convergence results, and compare rank-truncation algorithms. 

\section{Low-rank tensor representation}
\label{sec:tensors}

The vector space $\mathbb{C}^{n_1\cdots n_d}$, on which \eqref{eq:sum_of_krons} acts, is naturally isomorphic to the tensor space $\mathbb{C}^{n_1 \times \cdots \times n_d}$, as tensors can be placed in one-to-one correspondence with vectors by reshaping. We refer to $d$ as the dimension and $n_k$ ($k=1,2,\ldots,d$) as the mode sizes of the tensor. The total number of entries in the tensor grows exponentially with the dimension, making the storage and computation of eigenvectors for \eqref{eq:sum_of_krons} infeasible on modern computing devices. To reduce the number of degrees of freedom, we seek eigenvectors represented in a low-rank tensor format. A low-rank tensor format factorizes a tensor $v$ into a network of low-dimensional tensors, enabling more efficient storage and computation. Several tensor formats are available, including tensor train (TT) \cite{oseledets2011tensor}, hierarchical Tucker \cite{grasedyck2010hierarchical}, canonical polyadic (CP), and Tucker \cite{kolda2009tensor}. In this paper, we focus on tensors and operators represented in the TT format, also known as the matrix product state (MPS) in the physics literature \cite{Orus2014}. The algorithms discussed in the subsequent sections can be readily adapted to other tensor formats. 
In the remainder of this section, we provide a brief overview of the TT format and arithmetic operations involving TTs.

\subsection{Tensor train format}
\label{sec:TT_format}

A tensor $v \in \mathbb{C}^{n_1 \times \cdots \times n_d}$ is said to be in the tensor train (TT) format if its entries are expressed as 
\begin{equation} \label{eq:TT_format} 
v(i_1, i_2, \ldots, i_d) = C_1(i_1) C_2(i_2) \cdots C_d(i_d),
\end{equation}
where each $C_k$ is a $r_{k-1} \times n_k \times r_{k}$ tensor, referred to as a TT-core. Here, $C_k(i_k)$ is interpreted as a $r_{k-1} \times r_k$ matrix with $r_0=r_d=1$. Thus each element of $v$ is represented as a product of $d$ matrices, hence the name matrix product state (MPS) given to this representation in physics literature. If we assume the $r_k$ are all equal to $r$ and the mode size $n_k$ are all equal to $n$, the decomposition \eqref{eq:TT_format} has $\mathcal{O}(dnr^2)$ degrees of freedom. Importantly, this number scales linearly with the number of dimensions $d$, provided the parameter $r$ remains constant. The vector $\bm r = (1,r_1,\ldots,r_{d-1},1)$ containing the minimal $r_k$ needed to represent $v$ is called the TT-rank of $v$ and the efficiency of the TT representation \eqref{eq:TT_format} relies on keeping the TT-rank as small as possible. To apply the Lanczos method or subspace iteration for computing eigenvectors of \eqref{eq:sum_of_krons} in the TT format \eqref{eq:TT_format}, we first review the essential arithmetic operations on TTs. A key challenge of iterative methods with TTs is that standard linear algebra operations, such as matrix-vector products and linear combinations, increase TT rank and hence are often followed by rank-truncation to control computational cost. Such rank-truncation is the source of inexactness in the TT algorithms to follow.

\subsubsection{Inner product of tensors in TT format}
The Euclidean inner product of two TTs $v(i_1,\ldots,i_d) = C_1(i_1)\cdots C_d(i_d)$ and $w(i_1,\ldots,i_d) = D_1(i_1)\cdots D_d(i_d)$ with compatible mode sizes can be computed in $\mathcal{O}(dnr^3)$ by performing tensor contractions, i.e., summing over shared dimensions, in an appropriate order. For more details we refer the reader to \cite[Algorithm 4]{oseledets2011tensor}. 

\subsubsection{Addition of tensors in TT format} 
\label{sec:addition}
Given TTs $v(i_1,\ldots,i_d) = C_1(i_1)\cdots C_d(i_d)$ and $w(i_1,\ldots,i_d) = D_1(i_1)\cdots D_d(i_d)$ with compatible mode sizes, the sum $v + w$ can be represented in the TT format by concatenating the cores of $v$ and $w$. The dimensions of the concatenated cores, and hence the resulting TT rank, is equal to the sum of the dimensions of $C_k$ and $D_k$. To control the TT-ranks, sums are typically followed by a rank-truncation procedure or directly approximated with a low-rank tensor, as described in \cref{sec:truncation}. 

\subsubsection{TT matrices and matrix-vector products} 

The matrix \eqref{eq:sum_of_krons} is represented in a low-rank format known as the canonical polyadic (CP) decomposition. This Kronecker product structure allows each summand to be applied efficiently to a TT. After applying all summands, the resulting sum can be approximated by a low-rank TT using the truncation procedure described in \cref{sec:truncation}. This yields an efficient method for computing approximate matrix–vector products between \eqref{eq:sum_of_krons} and a TT. 

Alternatively, any operator expressed in the CP format \eqref{eq:sum_of_krons} can be written as a TT-matrix \cite[Section 2.3]{kressner2014}, also referred to as matrix product operator (MPO) in the physics literature, as 
\begin{equation} \label{eq:tt_matrix}
    A(i_1 \cdots i_d, j_1 \cdots j_d) = 
    A_1(i_1,j_1) A_2(i_2,j_2) \cdots A_d(i_d,j_d), 
\end{equation}
where $A_k(i_k,j_k) \in \mathbb{C}^{r_{k-1} \times r_k}$. 
When both \( A \) and \( v \) are represented in the TT format, the resulting vector \( w \) can also be expressed in the TT format with the dimension of each core being the product of the corresponding cores in $A$ and $v$~\cite[Section 4.3]{oseledets2011tensor}.
Similar to the addition of TTs, to control the TT rank, matrix-vector products are typically followed by a truncation procedure or directly approximated with a low-rank TT. More details are provided in \cref{sec:truncation}.

\subsubsection{TT rank-truncation} 
\label{sec:truncation}

Common operations, such as addition and matrix-vector products between TTs, as described above, result in TTs with increased rank. To maintain low-rank vectors and hence computationally efficient algorithms, the result of these operations are typically approximated with a TT of smaller rank.
A straightforward approach to computing rank $\bm r$ TT approximations of sums and matrix-vector products involves first constructing a TT with larger ranks and then recursively applying SVD-based truncation with the 
TT-SVD algorithm \cite{oseledets2011tensor}. The user can select a maximum allowable rank $\bm r$ or a desired accuracy $\epsilon>0$. In the latter case the TT-SVD algorithm selects the rank $r_j$ by discarding all singular values smaller than $\delta = \epsilon/\sqrt{d-1}$ of a relevant matrix, so that the error introduced by truncating $w$ to rank-$\bm r$ satisfies 
\begin{equation} \label{eq:global_trunc_err}
    \|\T_{\bm r}(w) - w\|_F \leq \epsilon. 
\end{equation} 

Other approaches to approximating sums and matrix-vector products with low-rank TTs avoid constructing intermediate TTs with large ranks, offering greater computational efficiency at the expense of some accuracy. Examples include randomized algorithms \cite{al2023randomized}, projections onto a suitable TT tangent space \cite[Section 4.3]{Steinlechner2016}, and variational methods. Such approaches are discussed further in \cref{sec:cost_subspace} and \cref{sec:randtruncation}. 
%
%
In the development of the following algorithms, we use $\mathfrak{T}_{\bm r}$ to denote a rank-$\bm r$ truncation operator without specifying a particular truncation algorithm. Computational costs of different truncation algorithms are discussed in \cref{sec:cost_and_imp} and comparisons are presented in our numerical experiments in \cref{sec:improving_eff}.

\section{Chebyshev filtered subspace iteration}
\label{sec:chebfsi}
Historically, subspace iteration has been overshadowed by Krylov methods, which are often more efficient when using standard arithmetic. However, in the context of low-rank approximation, subspace iteration offers distinct advantages over Krylov methods such as those proposed in \cite{MPS_Lanczos,kressner2010krylov,kressner2011,Palitta2021} and summarized in \cref{sec:lanczos_lr}. A major challenge in low-rank Krylov methods is that orthogonal Krylov basis vectors cannot be accurately represented in low-rank tensor formats, leading to significant truncation errors that hinder convergence. In contrast, subspace iteration methods can directly target low-rank eigenvectors as the basis for the subspace
bypassing the need for a Krylov basis. After discussing low-rank inexact subspace iteration algorithms, we demonstrate in the subsequent sections that the convergence of low-rank subspace iteration is more robust to truncation errors compared to Krylov methods. 

\subsection{Power iteration}
\label{sec:power_iter}

To illustrate the rank truncated subspace iteration in the simplest setting, consider the power iteration -- a subspace iteration involving a single vector. In power iteration, the matrix $A$ is repeatedly applied to an initial vector, and under certain conditions, this process converges to the dominant eigenvector of $A$. When $A$ and the vector being iterated are represented as low-rank tensors, applying $A$ to the current iterate typically results in a vector of higher rank. In rank-truncated power iteration, this vector is truncated to a user-provided rank $\bm r$ and then normalized before proceeding to the next iteration. The main steps of power iteration with rank truncation are presented in \cref{alg:truncated_power}. In this algorithm, the operator $\mathfrak{T}_{\bm r}$ denotes an operator that truncates a TT to rank $\bm r$. The truncation rank $\bm r$ and the algorithm used to perform the truncation is set by the user. More details on choices of rank-truncation algorithms and their computational costs are discussed in \cref{sec:cost_subspace}.


\begin{algorithm}
\caption{Power iteration with rank truncation}
\label{alg:truncated_power}
\begin{algorithmic}[1]
\Require 
\Statex \hspace{1em} $A \in \mathbb{C}^{(n_1 \cdots n_d) \times (n_1 \cdots n_d)}$ in TT-matrix format \eqref{eq:tt_matrix}
\Statex \hspace{1em} $v^{(0)} \in \mathbb{C}^{n_1 \cdots n_d}$ in TT format \eqref{eq:TT_format}
\Statex \hspace{1em} $\bm r \in \mathbb{N}^{d+1}$ TT truncation rank
\Ensure
\Statex \hspace{1em} $v^{(i)} \in \mathbb{C}^{n_1 \cdots n_d}$ rank $\bm r$ TT approximating dominant eigenvector of $A$
\vspace{0.5em}
\State $i \gets 0$
\Repeat
    \State $z^{(i+1)} \gets \mathfrak{T}_{\bm r}\!\left( A v^{(i)} \right)$ \Comment{Apply $A$ and truncate}
    \State $v^{(i+1)} \gets z^{(i+1)} / \lVert z^{(i+1)} \rVert$ \Comment{Normalize}
    \State $i \gets i+1$
\Until{convergence}
\end{algorithmic}
\end{algorithm}

\subsection{Subspace iteration} 

A basic form of the subspace iteration begins with a $m$ dimensional basis. One iteration involves applying the matrix $A$ to each basis vector and then orthonormalizing the resulting vectors against each other. The matrix multiplication amplifies components of each vector in the dominant eigendirection and the orthonormalization ensures that each vector converges to a different eigenvector. Under suitable conditions, iterating these steps yields a sequence of bases that eventually span the dominant eigenspace of $A$. 
We consider a low-rank variant of subspace iteration where all vectors are approximated using low-rank TTs. To describe one iteration of the low-rank TT subspace iteration suppose we have a basis of $m$ rank-$\r$ TTs $(v_1,\ldots,v_m)$. To perform one iteration we apply the matrix $A$ to each basis vector and truncate the result 
\begin{equation}
\label{subspace_1}
z_j = \mathfrak{T}_{\bm r}\left( A v_j \right), \qquad j = 1,2,\ldots,m. 
\end{equation}
Just as with the power iteration described in \cref{sec:power_iter}, the truncation rank $\bm r$ and the algorithm used to perform the truncation is set by the user. More details on choices of rank-truncation algorithms and their computational costs are provided in \cref{sec:cost_subspace}. 
Then, we construct new basis vectors by approximately orthonoromalizing the $z_j$ in \eqref{subspace_1} using a low-rank variant of the Rayleigh-Ritz projection. 
The Rayleigh-Ritz procedure is applied to ensure that the updated basis vectors are Ritz vectors, providing approximations of the dominant eigenvectors of $A$, which we assume can be represented as low-rank TTs. Note that in subspace iteration implementations without rank truncation, it is typical to use a different orthogonalization procedure such as QR-decomposition on the vectors \eqref{subspace_1}. This however faces the same issue as Krylov subspace methods summarized in \cref{sec:krylov}. In general, the orthogonal basis vectors are not low-rank making them difficult to represent efficiently in the TT format. We summarize the main steps of low-rank subspace iteration in \cref{alg:subspace_low_rank}. In the algorithm we denote by $\texttt{RR}(A,z_1,\ldots,z_m)$ a call to the low-rank Rayleigh-Ritz procedure described hereafter. 

\begin{algorithm}
\caption{Subspace iteration with rank truncation}
\label{alg:subspace_low_rank}
\begin{algorithmic}[1]
\Require
\Statex \hspace{1em} 
$A \in \mathbb{C}^{(n_1 \cdots n_d) \times (n_1 \cdots n_d)}$ in TT-matrix format \eqref{eq:tt_matrix}
\Statex \hspace{1em} $v_1^{(0)}, \ldots, v_m^{(0)} \in \mathbb{C}^{n_1 \cdots n_d}$ system of $m$ TTs 
\Statex \hspace{1em} $\bm r \in \mathbb{N}^{d+1}$ TT truncation rank
\Ensure
\Statex \hspace{1em} 
$v_1^{(i)}, \ldots, v_m^{(i)} \in \mathbb{C}^{n_1 \cdots n_d}$ rank $\bm r$ TTs approximating dominant eigenvectors of $A$ 
\Statex \hspace{1em} 
$\lambda_1^{(i)}, \ldots, \lambda_m^{(i)} \in \mathbb{C}$ approximating $m$ dominant eigenvalues of $A$
\vspace{0.5em}
\State $i \gets 1$
\While{not converged} 
    \State $z_j^{(i)} \gets \mathfrak{T}_{\bm r}\!\left(A v_j^{(i-1)} \right)$ 
           \quad for $j = 1,\ldots,m$
    \State $(v_1^{(i)}, \ldots, v_m^{(i)}), \, (\lambda_1^{(i)}, \ldots, \lambda_m^{(i)}) 
           \gets \texttt{RR}(A, z_1^{(i)}, \ldots, z_m^{(i)})$
    \State $i \gets i+1$
\EndWhile
\end{algorithmic}
\end{algorithm}

\subsection{Rayleigh-Ritz procedure} 
\label{sec:RR}

In order to obtain low-rank approximate eigenvectors of $A$ from within the subspace $\text{span}\{z_1,\ldots,z_m\}$ where each $z_j$ is a low-rank TT, we employ a low-rank variant of the Rayleigh-Ritz procedure. To perform the Rayleigh-Ritz projection we construct the $m \times m$ matrices 
\begin{equation}
\label{projection_mats_RR}
\begin{aligned}
\quad W_{ij} = \left\langle z_i,z_j\right\rangle, \quad P_{ij} = \left\langle z_i,A z_j \right\rangle \qquad  i,j = 1,2,\ldots,m, 
\end{aligned}
\end{equation}
where the inner products are computed efficiently from the TT-cores using \cite[Algorithm 4]{oseledets2011tensor}. Applying the matrix $A$ to $z_j$ in the construction of $P$ results in a TT with increased rank. We do not truncate since the inner product that immediately follows the matrix vector product has the same computational cost as truncation. Therefore no low-rank approximation is made in the construction of the projected matrices $W$ and $P$. We then solve the generalized $m \times m$ eigenvalue problem 
\begin{equation} 
\label{gen_eig}
P \Phi = W \Phi{\Lambda} . 
\end{equation} 
The diagonal entries of ${\Lambda}$ are the approximate eigenvalues (Ritz values) and $\Phi$ contains the coefficients of the corresponding approximate eigenvectors (Ritz vectors) relative to the basis $(z_1,\ldots,z_m)$. Hence the Ritz vectors are linear combinations of the basis 
\begin{equation}
\label{eq:truncated_ritz_vecs}
\tilde{v}_j =  \mathfrak{T}_r 
\left( \sum_{i=1}^m \Phi_{ij} z_i \right), 
\qquad i = 1,2,\ldots,m. 
\end{equation}
As described in \cref{sec:addition}, adding low-rank TTs increases rank and thus each linear combination in \eqref{eq:truncated_ritz_vecs} must be approximated with a low-rank TT. Different approaches can be taken for approximating such linear combinations such as applying TT-SVD several times, approximating the sum on a tangent space of a low-rank TT manifold, or randomized algorithms. When the number of summands $m$ is large enough, we found that tangent space projections or randomized algorithms are particularly advantageous for efficiently approximating the linear combinations in \eqref{eq:truncated_ritz_vecs} with low-rank TTs. 
Finally we normalize the Ritz vectors 
$v_j = \tilde{v}_j/\|\tilde{v}_j\|_F$, which does not require any low-rank truncation. We summarize the low-rank Rayleigh-Ritz procedure in \cref{alg:TT-Rayleigh-Ritz}.

\begin{algorithm}
\caption{Rayleigh–Ritz projection with rank truncation}
\label{alg:TT-Rayleigh-Ritz}
\begin{algorithmic}[1]
\Require
\Statex \hspace{1em} $A \in \mathbb{C}^{(n_1\cdots n_d) \times (n_1\cdots n_d)}$ in TT-matrix format
\Statex \hspace{1em} $(z_1,\ldots,z_m)$ TT basis vectors for a subspace of $\mathbb{C}^{n_1\cdots n_d}$
\Statex \hspace{1em} $\bm r \in \mathbb{N}^{d+1}$ TT truncation rank
\Ensure
\Statex \hspace{1em} 
$v_1^{(i)}, \ldots, v_m^{(i)} \in \mathbb{C}^{n_1 \cdots n_d}$ rank $\bm r$ TTs approximating dominant eigenvectors of $A$ 
\Statex \hspace{1em} 
$\lambda_1^{(i)}, \ldots, \lambda_m^{(i)} \in \mathbb{C}$ approximating $m$ dominant eigenvalues of $A$
\vspace{0.5em}
\State $W_{jk} \gets \langle z_j, z_k \rangle$, \quad $j,k=1,\ldots,m$ \Comment{Gram matrix of the basis}
\State $P_{jk} \gets \langle z_j, A z_k \rangle$, \quad $j,k=1,\ldots,m$ \Comment{Projected matrix in the subspace}
\State Solve $P \Phi = \Lambda W \Phi$ \Comment{Generalized eigenproblem for Ritz values/vectors}
\State $\tilde{v}_k \gets \mathfrak{T}_{\bm r}\!\left( \sum_{j=1}^{m} \Phi_{jk} z_j \right), \quad k=1,\ldots,m$ \Comment{Truncate linear combination to rank $\bm r$}
\State $v_j \gets \tilde{v}_j / \lVert \tilde{v}_j \rVert, \quad j=1,\ldots,m$ \Comment{Normalize Ritz vectors}
\end{algorithmic}
\end{algorithm}


\subsection{Chebyshev polynomial filtering}
\label{sec:cheb_filtering}

The convergence rate of subspace iteration without truncation is governed by the eigenvalue ratios of $A$, where the $j$th eigenvector converges at a rate proportional to $\lambda_{m+1}/\lambda_j$ \cite{Saad2011}. To accelerate convergence, one can replace $A$ with a polynomial $p(A)$ of $A$, such that the eigenvalues are transformed to $p(\lambda_i)$ and the ratio $p(\lambda_{m+1})/p(\lambda_j)$ becomes smaller than $\lambda_{m+1}/\lambda_j$. Performing subspace iteration on $p(A)$ instead of $A$ thus results in faster convergence. Furthermore, the use of a polynomial transformation allows us to target eigenvalues that are not the largest in magnitude. We consider Chebyshev polynomials of the first kind \cite{Saad_2016} defined as 
\begin{equation}
c_k(t) = 
\begin{cases}
\cos\left(k \cos^{-1}(t)\right), & t \in [-1,1], \\
\cosh\left(k \cosh^{-1}(t)\right), & t \not\in [-1,1], 
\end{cases}
\end{equation}
which can be computed using the three-term recurrence 
\begin{equation}
\label{cheby_three_term}
c_0(t) = 1, \quad 
c_1(t) = t, \quad
c_{k+1}(t) = 2t c_k(t) - c_{k-1}(t). 
\end{equation}
The polynomials $c_k$ are bounded inside of the interval $[-1,1]$ and rapidly increase outside $[-1,1]$. To construct a polynomial $p$ that yields a small ratio $p(\lambda_{m+1})/p(\lambda_j)$, we compose $c_k$ with a linear transformation that maps $\lambda_1,\ldots,\lambda_m$ outside of $[-1,1]$ and  $\lambda_{m+1},\ldots,\lambda_{n_1\cdots n_d}$ inside $[-1,1]$. To construct such a linear map, suppose that all eigenvalues of $A$ lie within the real interval $[a_0,b]$ and the eigenstates of interest $\lambda_1,\ldots \lambda_m$ lie in the interval $[a_0,a]$. The linear transformation 
\begin{equation} \label{eq:linear_trans}
l(t) = \frac{t-c}{e}, \qquad c = \frac{a+b}{2}, \qquad e = \frac{b-a}{2},
\end{equation}
maps $[a,b]$ into $[-1,1]$ and $[a_0,a)$ outside of $[-1,1]$. Therefore, the polynomial $p_k = c_k \circ l$ is bounded inside $[a,b]$ and rapidly increases in $[a_0,a)$, resulting in a large eigenvalue ratio $p_k(\lambda_{m+1})/p_k(\lambda_j)$. In practice the interval $[a,b]$ is not known a priori but can be estimated. For example, we can set $b$ to the largest eigenvalue (or its approximation), and the endpoint $a$ can be updated after each Rayleigh-Ritz step to the largest Ritz value. 

To perform low-rank Chebyshev filtered power (or subspace) iteration we $Av$ with a low-rank approximation of $p_k(A)v$, where the polynomial filter degree $k$ is a user-supplied parameter. 
To compute this low-rank approximation, we utilize the three-term recurrence \eqref{cheby_three_term}, inserting rank truncations to control the intermediate tensor ranks. This yields the rank-truncated three-term recurrence 
\begin{equation} \label{eq:three_term_cheby_trunc}
q_0 = v, \quad 
q_1 = \T_{\r}\left(\frac{Av-cv}{e}\right), \quad
q_{j+1} = \T_{\r}\left(
\frac{2\T_{\r}\left(Aq_j - cq_j\right)}{e} - q_{j-1}\right), 
\end{equation}
where $q_k$ is a rank-$\r$ approximation of $p_k(A)v$. 
As mentioned above, the operator $\mathfrak{T}_{\bm r}$ denotes an operator that truncates a TT to rank-$\bm r$. The truncation rank $\bm r$ and the algorithm used to perform the truncation is set by the user. More details on choices of rank-truncation algorithms and their computational costs are discussed in \cref{sec:cost_subspace}. We summarize the Chebyshev filtered subspace iteration with rank truncation in \cref{alg:filtered_subspace_low_rank}. 

\begin{algorithm}
\caption{Chebyshev filtered subspace iteration with rank truncation}
\label{alg:filtered_subspace_low_rank}
\begin{algorithmic}[1]
\Require
\Statex \hspace{1em} 
$A \in \mathbb{C}^{(n_1 \cdots n_d) \times (n_1 \cdots n_d)}$ in TT-matrix format \eqref{eq:tt_matrix}
\Statex \hspace{1em} $v_1^{(0)}, \ldots, v_m^{(0)} \in \mathbb{C}^{n_1 \cdots n_d}$ system of $m$ TTs 
\Statex \hspace{1em} $\bm r \in \mathbb{N}^{d+1}$ TT truncation rank
\Statex \hspace{1em} $k \in \mathbb{N}$ polynomial filter degree
\Ensure
\Statex \hspace{1em} 
$v_1^{(i)}, \ldots, v_m^{(i)} \in \mathbb{C}^{n_1 \cdots n_d}$ rank $\bm r$ TTs approximating dominant eigenvectors of $A$ 
\Statex \hspace{1em} 
$\lambda_1^{(i)}, \ldots, \lambda_m^{(i)} \in \mathbb{C}$ approximating $m$ dominant eigenvalues of $A$
\vspace{0.5em}
\State $i \gets 1$
\While{not converged} 
    \State $z_j^{(i)} \gets \mathfrak{T}_{\bm r}\!\left(p_k(A) v_j^{(i-1)} \right)$ 
           \quad for $j = 1,\ldots,m$
    \State $(v_1^{(i)}, \ldots, v_m^{(i)}), \, (\lambda_1^{(i)}, \ldots, \lambda_m^{(i)}) 
           \gets \texttt{RR}(A, z_1^{(i)}, \ldots, z_m^{(i)})$
    \State $i \gets i+1$
\EndWhile
\end{algorithmic}
\end{algorithm}

\section{Upper bounds on truncation error}
\label{sec:convergence}

In this section, we analyze the convergence behavior of TT subspace iteration with rank truncation. Unlike the analysis in \cite{Saad_2016}, we derive an explicit upper bound on the inexactness at each iteration, which guarantees progress toward convergence. These results serve as bounds on the truncation error in rank-truncated power and subspace iterations that ensure convergence.

\subsection{Power iteration} \label{sec:power_iter_conv}
Before analyzing rank-truncated subspace iteration, we first examine the convergence of the simpler rank-truncated power iteration (\cref{alg:truncated_power}) for computing the dominant eigenvector of the matrix $A$ defined in \eqref{eq:sum_of_krons}. To this end, let $\{\psi_j\}$ denote an orthonormal eigenbasis of $A$, with eigenvalues ordered such that $|\lambda_1| > |\lambda_2| \geq \cdots \geq |\lambda_n|$. 

First we recall a result for classical power iteration (without rank truncation), which relies on the fact that applying the matrix $A$ to a vector $v$ amplifies the component of $v$ in the direction of the dominant eigenvector $\psi_1$ by a factor of $|\lambda_1|$, while amplifying components orthogonal to $\psi_1$ by at most $|\lambda_2|$. Assuming $|\lambda_1|>|\lambda_2|$, each application of $A$ aligns $v$ more closely with $\psi_1$. To extract the components of a vector in the dominant and non-dominant eigendirections, we define the projectors 
\begin{equation} \label{eq:projectors_1}
    \P = \psi_1 \psi_1^{\top}, \qquad \P_{\perp} = I_{n_1 \cdots n_d} - \P.
\end{equation}  
A formal proof of convergence for classical power iteration is presented in the following theorem, where $\measuredangle(v, w)$ denotes the angle between vectors $v$ and $w$. The theorem demonstrates that applying $A$ to $v$ produces a new vector with a smaller angle relative to the dominant eigenvector $\psi_1$. Consequently, repeated applications of $A$ to a given vector gradually reduce this angle. 

\begin{theorem} \label{thm:pwr_conv_no_trunc}
For any vector $v$ such that $\|\P v\| \neq 0$ it holds that 
\begin{equation} \label{eq:pwr_conv}
\tan \measuredangle(Av,\psi_1) \leq \left|\frac{\lambda_2}{\lambda_1}\right|\tan\measuredangle(v,\psi_1). 
\end{equation}
\end{theorem}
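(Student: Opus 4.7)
The plan is to expand $v$ in the orthonormal eigenbasis $\{\psi_j\}$ and read the projections $\mathcal{P}$ and $\mathcal{P}_\perp$ off of the coefficients before and after multiplication by $A$. Writing $v = \sum_{j=1}^{n_1\cdots n_d} c_j \psi_j$, the hypothesis $\|\mathcal{P}v\| \neq 0$ translates to $c_1 \neq 0$, so $\measuredangle(v,\psi_1)$ is well defined and the Pythagorean decomposition $v = \mathcal{P}v + \mathcal{P}_\perp v$ yields the identity
\begin{equation*}
\tan\measuredangle(v,\psi_1) \;=\; \frac{\|\mathcal{P}_\perp v\|}{\|\mathcal{P}v\|} \;=\; \frac{\sqrt{\sum_{j\geq 2}|c_j|^2}}{|c_1|}.
\end{equation*}
This identity is the workhorse of the argument: the whole theorem reduces to comparing this ratio for $v$ against its analogue for $Av$.

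Next I would apply $A$ term by term using $A\psi_j = \lambda_j \psi_j$ to get $Av = \sum_j \lambda_j c_j \psi_j$, so that $\mathcal{P}(Av) = \lambda_1 c_1 \psi_1$ and $\mathcal{P}_\perp(Av) = \sum_{j\geq 2}\lambda_j c_j \psi_j$. Orthonormality of $\{\psi_j\}$ then gives $\|\mathcal{P}(Av)\| = |\lambda_1||c_1|$ and $\|\mathcal{P}_\perp(Av)\|^2 = \sum_{j\geq 2}|\lambda_j|^2|c_j|^2$. Note that $\lambda_1 \neq 0$ is automatic from the strict ordering $|\lambda_1|>|\lambda_2|\geq 0$ (in the degenerate case $n_1\cdots n_d = 1$ the theorem is vacuous), so $\tan\measuredangle(Av,\psi_1)$ is likewise well defined.

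The only inequality used in the argument is the spectral gap bound $|\lambda_j|\leq|\lambda_2|$ for every $j\geq 2$, coming directly from the assumed ordering of eigenvalues. Inserting this into the sum gives $\|\mathcal{P}_\perp(Av)\|^2 \leq |\lambda_2|^2 \sum_{j\geq 2}|c_j|^2 = |\lambda_2|^2\|\mathcal{P}_\perp v\|^2$, and dividing by $\|\mathcal{P}(Av)\| = |\lambda_1||c_1|$ yields
\begin{equation*}
\tan\measuredangle(Av,\psi_1) \;=\; \frac{\|\mathcal{P}_\perp(Av)\|}{\|\mathcal{P}(Av)\|} \;\leq\; \frac{|\lambda_2|}{|\lambda_1|}\cdot\frac{\|\mathcal{P}_\perp v\|}{\|\mathcal{P}v\|} \;=\; \left|\frac{\lambda_2}{\lambda_1}\right|\tan\measuredangle(v,\psi_1),
\end{equation*}
which is the claimed bound. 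There is no real obstacle in the argument; it is a three-line spectral calculation. The only subtlety worth flagging is the implicit assumption that $A$ admits an orthonormal eigenbasis, which is guaranteed for the Hermitian (or more generally normal) case but would require a Schur-basis reformulation outside that setting.
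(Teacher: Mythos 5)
Your proof is correct and follows essentially the same route as the paper's: expand $v$ in the orthonormal eigenbasis, observe that $\|\P Av\| = |\lambda_1|\,\|\P v\|$ while $\|\P_{\perp} Av\| \leq |\lambda_2|\,\|\P_{\perp} v\|$, and take the ratio of these to compare the tangents. The only cosmetic difference is that you make the coefficient-level Pythagorean computation explicit where the paper states the norm inequality directly, and you helpfully flag the implicit assumption of an orthonormal eigenbasis.
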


\noindent
The above theorem is a well-known result~\cite{parlett}, and we provide a proof in \cref{sec:proofpwr}. The inequality \eqref{eq:pwr_conv} stated in the theorem can be visualized by projecting the vector space $\mathbb{C}^{n_1 \cdots n_d}$ onto $\mathbb{R}^2$, where the first component represents the norm of the vector's component in the range of $\P$ (i.e., in the direction of $\psi_1$), while the second component represents the norm of the vector's component in the range of $\P_{\perp}$ (i.e., in the subspace orthogonal to $\psi_1$). We denote such projected vectors using a hat, e.g., 
\begin{equation} \label{eq:R2_proj}
\widehat{Av} = \begin{bmatrix}
\|\P Av\| \\ \|\P_{\perp} Av\|
\end{bmatrix} 
\quad \text{and} \quad 
\widehat{v} = \begin{bmatrix}
\|\P v\| \\ \|\P_{\perp} v\|
\end{bmatrix}. 
\end{equation}
\cref{fig:power_conv}(a) illustrates the convergence result of \cref{thm:pwr_convergence} for classical power iteration using these projected vectors. The vector $\widehat{Av}$ has a smaller angle with the horizontal axis than $\widehat{v}$, indicating that a single power iteration produces a vector that is more aligned with the direction of $\psi_1$. 

\subsubsection{Single power iteration with truncation} 
We now turn to the rank-truncated power iteration, outlined in \cref{alg:truncated_power}. This algorithm follows the same steps as the classical power iteration, with the key addition of a rank-truncation in line 3. We note that the following analysis is not limited to inexact power iteration arising from rank truncation and can be applied to any inexact power method. We express the rank-truncated vector produced in line 3 as 
\begin{equation} \label{eq:perturbed_power}
    z = Av + e, 
\end{equation}
where $e$ is a perturbation due to rank truncation and we suppressed the iteration index $i$ while examining a single iteration. Progress towards convergence is achieved in a single iteration if the angle between $z$ and $\psi_1$ is smaller than the angle between $v$ and $\psi_1$, i.e., $\measuredangle(z,\psi_1) < \measuredangle(v,\psi_1)$. Such progress is guaranteed when the perturbation $e$ introduced by rank-truncation is sufficiently small. 
To determine the acceptable size of $e$ that guarantees convergence we consider the vectors \eqref{eq:R2_proj} in \cref{fig:power_conv}. As long as the truncated vector remains within the green circle, the condition $\measuredangle(z,\psi_1) < \measuredangle(v,\psi_1)$ holds, ensuring progress towards convergence. Hence the radius of the green circle is the maximum allowable truncation error that ensures convergence for perturbations $e$ in arbitrary directions. This radius is the norm of the component of $\widehat{Av}$ that is orthogonal to $v$, which provides an upper bound for the truncation error 
\begin{equation} \label{eq:err_bnd1}
\|e\| < \left\|\left(I_2 - \widehat{v}\widehat{v}^{\top}\right)\widehat{Av}\right\|, 
\end{equation}
where $\|v\|=1$ and $I_2$ denotes the $2 \times 2$ identity matrix. This upper bound on the truncation error ensures convergence even in the worst case scenario, where the projected truncation vector $\widehat{e}$ is orthogonal to $\widehat{v}$. However, such a scenario does not occur in our numerical experiments. Instead, we observe that, in many cases, truncation can actually accelerate convergence. This is illustrated in \cref{fig:power_conv}(a), where $\measuredangle(z,\psi_1)<\measuredangle(Av,\psi_1)$, and demonstrated numerically in \cref{fig:laplace_2d_conv_iters}. 

\begin{figure}[tbhp]
\centering
\includegraphics[scale=0.24]{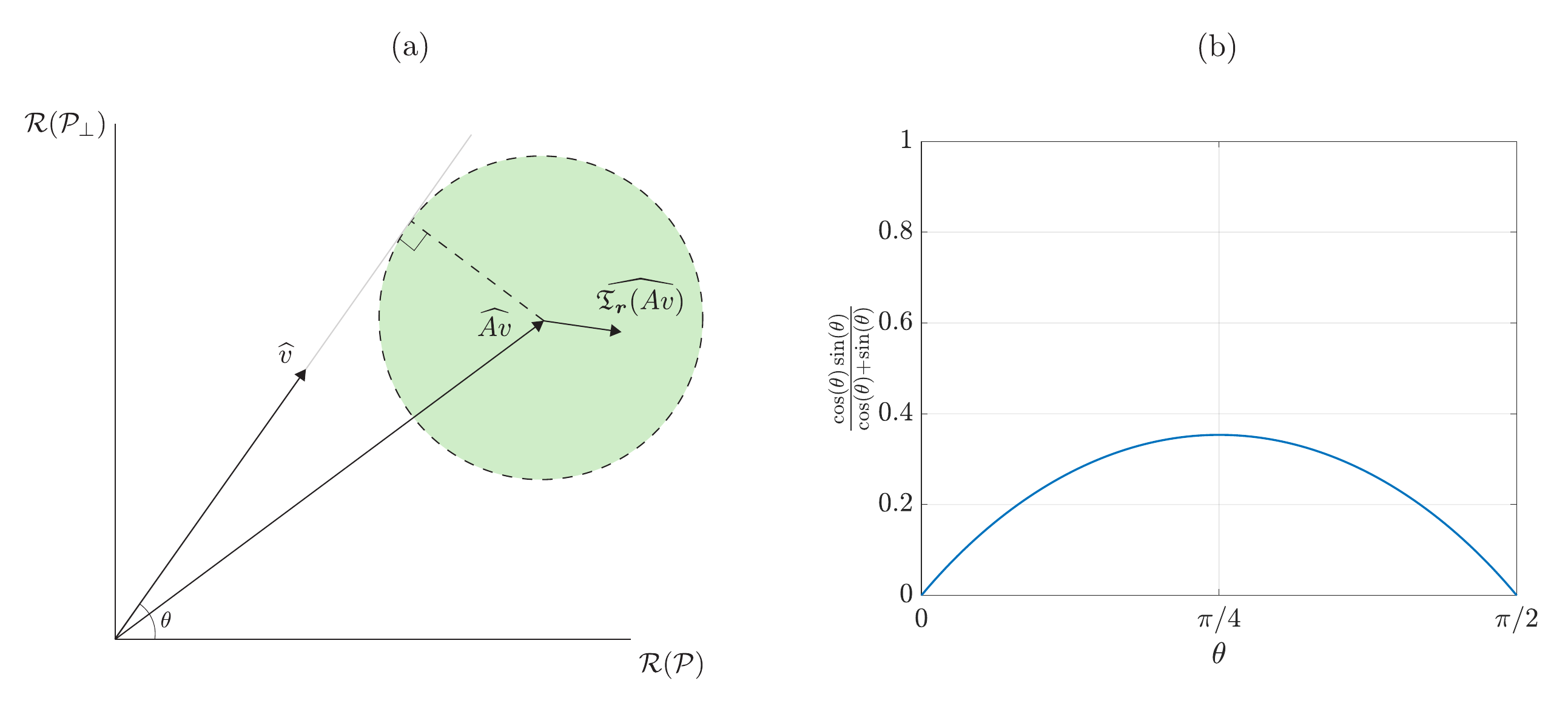} 
\caption{(a) Illustration of one iteration of truncated power iteration. The radius of the dashed ball is the upper bound on the size of absolute truncation error $\|e\|$ that guarantees progress is made towards convergence in a single iteration. 
(b) Upper bound (re-scaled by eigenvalue ratio) on truncation error sufficient for truncated power iteration convergence in \cref{thm:pwr_convergence}. 
}
\label{fig:power_conv}
\end{figure}

Next, we derive a more restrictive bound on the truncation error $e$ in terms of the eigenvalues of $A$. Since the truncation is applied to a $Av$, we bound norm of the truncation error relative to the largest singular value $|\lambda_1|$. 

\begin{theorem} \label{thm:pwr_convergence} 
Let $\theta = \measuredangle(v,\psi_1)$ denote the angle between $v$ and $\psi_1$. If the relative truncation error is bounded as 
\begin{equation} \label{eq:trunc_error_bound_angle}
    \frac{\|e\|}{|\lambda_1|} < \left(1 - \left|\frac{\lambda_2}{\lambda_1}\right|\right)
    \frac{\cos(\theta) \sin(\theta)}{\cos(\theta) + \sin(\theta)}, 
\end{equation}
then $\measuredangle(Av+e,\psi_1) < \measuredangle(v,\psi_1)$, i.e., a single inexact power iteration makes progress towards convergence. 
\end{theorem}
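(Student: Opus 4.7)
The plan is to work in the two-dimensional picture already set up in \cref{fig:power_conv}, comparing the projected vectors $\widehat{v}$, $\widehat{Av}$, and $\widehat{z}=\widehat{Av+e}$ in the coordinates determined by $\P$ and $\P_\perp$. Since angles with $\psi_1$ are preserved under this projection and the condition $\measuredangle(Av+e,\psi_1) < \measuredangle(v,\psi_1)$ is equivalent to $\tan\measuredangle(Av+e,\psi_1) < \tan\theta$, I will reduce the statement to a single scalar inequality of the form
\begin{equation*}
\frac{\|\P_\perp(Av+e)\|}{\|\P(Av+e)\|} < \frac{\sin\theta}{\cos\theta}.
\end{equation*}

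First I would bound the numerator and denominator separately. Writing $v = \P v + \P_\perp v$ with $\|\P v\| = \cos\theta$ and $\|\P_\perp v\| = \sin\theta$ (using $\|v\|=1$), the fact that $\P$ and $\P_\perp$ commute with $A$ (they are spectral projectors) gives
\begin{equation*}
\|\P Av\| = |\lambda_1|\cos\theta, \qquad \|\P_\perp Av\| \le |\lambda_2|\sin\theta,
\end{equation*}
the second inequality following because $A$ restricted to $\operatorname{range}(\P_\perp)$ has spectral radius at most $|\lambda_2|$. Applying the triangle inequality to $z = Av+e$ and using $\|\P e\|, \|\P_\perp e\| \le \|e\|$ yields
\begin{equation*}
\|\P z\| \ge |\lambda_1|\cos\theta - \|e\|, \qquad \|\P_\perp z\| \le |\lambda_2|\sin\theta + \|e\|.
\end{equation*}

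Next I would substitute these bounds into the tangent inequality and clear denominators. Assuming for the moment that $|\lambda_1|\cos\theta > \|e\|$ (so the denominator is positive), the sufficient condition becomes
\begin{equation*}
(|\lambda_2|\sin\theta + \|e\|)\cos\theta < (|\lambda_1|\cos\theta - \|e\|)\sin\theta,
\end{equation*}
which rearranges directly to
\begin{equation*}
\|e\|(\cos\theta + \sin\theta) < (|\lambda_1|-|\lambda_2|)\cos\theta\sin\theta.
\end{equation*}
Dividing by $|\lambda_1|(\cos\theta+\sin\theta)$ recovers exactly the hypothesis \eqref{eq:trunc_error_bound_angle}.

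The only point requiring a little care is verifying that the hypothesis already implies $|\lambda_1|\cos\theta > \|e\|$, so that the division above is valid; this is the main (minor) obstacle. It follows from \eqref{eq:trunc_error_bound_angle} after observing that $(1-|\lambda_2/\lambda_1|)\sin\theta/(\cos\theta+\sin\theta) < 1$, whence the right-hand side of \eqref{eq:trunc_error_bound_angle} is strictly less than $\cos\theta$, giving $\|e\| < |\lambda_1|\cos\theta$. Everything else is routine algebra, and the geometric picture in \cref{fig:power_conv}(a) — showing $\widehat{z}$ confined to the ball around $\widehat{Av}$ whose tangent line from the origin coincides with $\widehat{v}$ — makes the argument transparent.
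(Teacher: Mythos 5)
Your proof is correct and follows essentially the same route as the paper: both use $\|\P Av\| = |\lambda_1|\|\P v\|$, $\|\P_\perp Av\| \le |\lambda_2|\|\P_\perp v\|$, and the triangle inequality on $e$ to compare the ratio of projections of $Av+e$ with $\tan\theta$, the only difference being that you argue with the tangent while the paper works with its reciprocal. Your explicit verification that $\|e\| < |\lambda_1|\cos\theta$ (so the denominator bound is positive) is a point the paper leaves implicit, and it is a worthwhile addition.
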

\begin{proof}
Substituting $\cos(\theta) = \|\P v\|$ and $\sin(\theta)=\|\P_{\perp}v\|$ we rewrite the inequality \eqref{eq:trunc_error_bound_angle} as 
\begin{equation}
\|e\| < \left(|\lambda_1| - |\lambda_2| \right)\frac{\|\P v\| \cdot \|\P_{\perp} v\|}{\|\P v\| + \|\P_{\perp} v\|}. 
\end{equation}
Multiplying by $\|\P v\| + \|\P_{\perp} v\|$ we have 
$$
\|\P v\|\cdot \|e\| + \|\P_{\perp} v\| \cdot \|e\| < \left(|\lambda_1| - |\lambda_2| \right) \|\P v\| \cdot \|\P_{\perp} v\|. 
$$
Dividing by $\|\P_{\perp} v\|$ we obtain 
$$
\frac{\|\P v\|}{\|\P_{\perp} v\|} \cdot \|e\| + \|e\| < |\lambda_1| \|\P v\| - |\lambda_2|\|\P v\|. 
$$
Rearranging terms the preceding inequality can be written as 
$$
|\lambda_1| \|\P v\| - \|e\| > \frac{\|\P v\|}{\|\P_{\perp} v\|} \left( |\lambda_2| \|\P_{\perp} v\| + \|e\|\right). 
$$
Dividing by $\left( |\lambda_2| \|\P_{\perp} v\| + \|e\|\right)$ we have 
\begin{equation} \label{eq:bnd1}
\frac{|\lambda_1| \|\P v\| - \|e\|}{|\lambda_2| \|\P_{\perp} v\| + \|e\|} > \frac{\|\P v\|}{\|\P_{\perp} v\|}. 
\end{equation}
It follows from 
\begin{equation} 
\left\| \P_{\perp} Av \right\| \leq  |\lambda_2|\left\|\P_{\perp} v\right\|
\quad \text{and} \quad 
\left\| \P Av \right\| = |\lambda_1|\left\|\P v\right\|. 
\end{equation} 
that the left hand side of \eqref{eq:bnd1} can be bounded from above by 
\begin{equation} \label{eq:ineq1}
\frac{\| \P Av\| - \|e\|}{ \left\|\P_{\perp} \left(Av + e\right) \right\|} 
\leq \frac{\| \P Av\| - \|\P e\|}{ \left\|\P_{\perp} \left(Av + e\right) \right\|} 
\leq \frac{\| \P Av + \P e\|}{ \left\|\P_{\perp} \left(Av + e\right) \right\|} \\
\leq \frac{\| \P \left(Av + e\right)\|}{ \left\|\P_{\perp} \left(Av + e\right) \right\|} 
\end{equation}
Consequently, we have 
\begin{equation}
\frac{\| \P \left(Av + e\right)\|}{ \left\|\P_{\perp} \left(Av + e\right) \right\|}  \geq \frac{|\lambda_1| \|\P v\| - \|e\|}{|\lambda_2| \|\P_{\perp} v\| + \|e\|} > \frac{\|\P v\|}{\|\P_{\perp} v\|}, 
\end{equation} 
completing the proof. 
\end{proof}

\noindent
The upper bound given in \eqref{eq:trunc_error_bound_angle} depends on the eigenvalue ratio $|\lambda_2/\lambda_1|$ and the angle $\theta = \measuredangle(v,\psi_1)$, which quantifies the error of the current iterate $v$ as an approximation of the dominant eigenvector $\psi_1$. In \cref{fig:power_conv}(b), we plot the upper bound as a function of the angle $\theta$ without the scaling factor due to the eigenvalue ratio. When $\theta$ is near zero, i.e., the angle between $v$ and $\psi_1$ is small, smaller truncation errors are required to maintain convergence. At this stage, significant truncation error could dominate the approximation error and hinder further convergence. When $\theta$ is close to $\pi/2$ the component of $v$ in the $\psi_1$ direction is small and a small truncation error is required to ensure that this component is able to grow. When $\theta$ is near $\pi/4$ the component of $v$ in the $\psi_1$ direction is similar to the component of $v$ in the non-dominant eigendirections. During this stage of convergence the largest truncation errors are permitted. 

Now let us assume that the dominant eigenvector $\psi_1$ is represented exactly as a rank-$\bm r$ TT. Denote the angle between the $i$th truncated power iterate $v^{(i)}$ and the dominant eigenvector $\psi_1$ by 
\begin{equation}
\theta^{(i)} = \measuredangle\left(v^{(i)},\psi_1\right). 
\end{equation}
Since we have normalized $v^{(i)}$ in line 4 of \cref{alg:truncated_power}, we have \begin{equation}
\sin\left(\theta^{(i)}\right) = \left\|\P_{\perp} v^{(i)}\right\|, 
\end{equation}
where $\P_{\perp}$, defined in \eqref{eq:projectors_1}, is the orthogonal projection onto the space orthogonal to $\psi_1$. Given that $\psi_1$ has rank-$\r$ by assumption, the error of the best rank-$\r$ approximation of $v^{(i)}$ is bounded as 
\begin{equation}
\left\|\T_{\r}\left(v^{(i)}\right) - v^{(i)}\right\| \leq \left\|\P_{\perp} v^{(i)}\right\| = \sin\left(\theta^{(i)}\right). 
\end{equation} 
Thus, as $v^{(i)}$ aligns more closely with $\psi_1$, the rank-$\r$ truncation error naturally decreases to zero.

\subsubsection{Several power iterations with truncation}
\label{sec:several_iters}

Next we discuss the effect of rank truncation over several power iterations. After $p$ iterations, a simple recursive argument yields
\begin{equation} \label{eq:several_iterations}
    v^{(p)} = A^p v^{(0)} + \sum_{i=1}^p A^{p-i} e^{(i)}, 
\end{equation}
where $e^{(i)}$ denotes a perturbation due to rank truncation at iteration $i$. In \eqref{eq:several_iterations} each error vector $e^{(i)}$ is multiplied by $A^{p-i}$ so that as $p$ increases, truncation error from early iterations are amplified by large powers of $A$. Consequently, these errors become more aligned with $\psi_1$, mitigating their impact on convergence. 
In contrast, low-rank Krylov methods (see \cref{sec:krylov}) are more sensitive to large truncation errors in the early iterations. Such errors can cause significant deviations from the true Krylov subspace, often resulting in stagnation and poor convergence. This distinction highlights a key advantage of rank-truncated subspace iteration over rank truncated Krylov methods. 

\subsubsection{Polynomial accelerated power iteration} 
\label{sec:poly_filter} 

For rank-truncated polynomial accelerated power iteration, the approximate matrix multiplication in line 3 of \cref{alg:truncated_power} is replaced with a low-rank approximation of $p_k(A)v$ computed using the truncated three-term recurrence \eqref{eq:three_term_cheby_trunc}. We can directly apply results from the preceding section to Chebyshev filtered power iteration by writing the result of the three-term recurrence with truncation \eqref{eq:three_term_cheby_trunc} as $z = p_k(A)v + e$ where $e$ is the low-rank approximation error accumulated during the truncated three-term recurrence. The bound in \eqref{eq:trunc_error_bound_angle} for the truncation error can then be directly applied to the polynomial accelerated power iteration. 
Observe that increasing the polynomial degree $k$ increases both the truncation error and the tolerance for this error. Indeed, the number of truncations required to compute a low-rank approximation of $p_k(A)v$ using the truncated three-term recurrence \eqref{eq:three_term_cheby_trunc} increases with $k$, which often leads to larger truncation error $e$. At the same time, increasing $k$ causes $p_k$ to grow faster, resulting in a smaller eigenvalue ratio $p_k(\lambda_2)/p_k(\lambda_1)$, hence a larger upper bound for the truncation error in \eqref{eq:trunc_error_bound_angle}. 

\subsection{Subspace iteration}

We now extend the analysis of rank-truncated power iteration from \cref{sec:power_iter_conv} to subspace iteration. Specifically, we derive an upper bound on the truncation error that ensures each inexact subspace iteration makes progresses towards convergence. To this end, we arrange the $m$ dominant eigenvectors of $A$ as the columns of 
$\Psi_{\leq m} \in \mathbb{C}^{(n_1 \cdots n_d) \times m}$, the remaining eigenvectors of $A$ as the columns of $\Psi_{>m}\in \mathbb{C}^{(n_1 \cdots n_d) \times (n_1\cdots n_d - m)}$, and the $m$ vectors from a single subspace iteration as the columns of $V \in \mathbb{C}^{(n_1 \cdots n_d) \times m}$. As before, we omit the iteration index $i$ to simplify notation while analyzing a single iteration. 
To measure convergence, we consider the largest principal angle between the ranges of $V$ and $\Psi_{\leq m}$, denoted by $\Theta(\R(V),\R(\Psi_{\leq m}))$. The tangent of this principal angle is given by \cite[Theorem 5.1]{knyazev2012principal} 
\begin{equation} \label{eq:tan_princ_angle}
\tan \Theta(\R(V),\R(\Psi_{\leq m})) = \sigma_1(T_V),
\end{equation}
where 
\begin{equation} \label{eq:Tv_def}
T_V = \Psi_{>m}^{\top} V \left(\Psi_{\leq m}^{\top} V\right)^{-1}, 
\end{equation}
and $\sigma_1(\bullet)$ returns the largest singular value of its argument. Before addressing the effect of rank truncation, we first establish convergence for classical subspace iteration. Specifically, we show that one iteration without rank truncation reduces the tangent of the largest principal angle between the approximate subspace and the target subspace $\R(\Psi_{\leq m})$ by at least a factor of $|\lambda_{m+1}/\lambda_m|$. This generalizes the convergence result of \cref{thm:pwr_conv_no_trunc} from power iteration to subspace iteration. We assume that the projection of the current subspace $V$ onto the target subspace $\Psi_{\leq m}$ has dimension $m$, i.e., $\Psi_{\leq m}^{\top} V$ is invertible, a standard assumption in the analysis of subspace iteration (see, e.g., \cite[Theorem 5.2]{Saad2011}). 
\begin{theorem} \label{thm:sub_conv}
If $\Psi_{\leq m}^{\top} V$ is invertible then 
\begin{equation}
    \tan\Theta(\R(AV),\R(\Psi_{\leq m})) \leq \left|\frac{\lambda_{m+1}}{\lambda_m}\right| \tan\Theta(\R(V),\R(\Psi_{\leq m})). 
\end{equation}
\end{theorem}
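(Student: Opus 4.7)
The plan is to compute $T_{AV}$ explicitly in terms of $T_V$ and the eigenvalue diagonal blocks, then apply submultiplicativity of the spectral norm. Using the identity \eqref{eq:tan_princ_angle}, it suffices to show that $\sigma_1(T_{AV}) \leq |\lambda_{m+1}/\lambda_m|\, \sigma_1(T_V)$.

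First I would set $\Lambda_{\leq m} = \operatorname{diag}(\lambda_1,\ldots,\lambda_m)$ and $\Lambda_{>m} = \operatorname{diag}(\lambda_{m+1},\ldots,\lambda_{n_1\cdots n_d})$, so that $A \Psi_{\leq m} = \Psi_{\leq m} \Lambda_{\leq m}$ and $A \Psi_{>m} = \Psi_{>m} \Lambda_{>m}$. Using orthonormality of the eigenbasis, this gives $\Psi_{\leq m}^{\top} A = \Lambda_{\leq m} \Psi_{\leq m}^{\top}$ and $\Psi_{>m}^{\top} A = \Lambda_{>m} \Psi_{>m}^{\top}$. Substituting into the definition \eqref{eq:Tv_def} applied to $AV$ yields
\begin{equation*}
T_{AV} = \Psi_{>m}^{\top}(AV)\,\bigl(\Psi_{\leq m}^{\top}(AV)\bigr)^{-1} = \Lambda_{>m}\,\Psi_{>m}^{\top}V\,\bigl(\Lambda_{\leq m}\,\Psi_{\leq m}^{\top}V\bigr)^{-1} = \Lambda_{>m}\,T_V\,\Lambda_{\leq m}^{-1}.
\end{equation*}
Here the inverse exists because $\Psi_{\leq m}^{\top}V$ is invertible by hypothesis and $\Lambda_{\leq m}$ is invertible since all targeted eigenvalues are dominant (and hence nonzero if $|\lambda_m| > |\lambda_{m+1}| \geq 0$, which is the case of interest).

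Next I would apply submultiplicativity of the spectral norm:
\begin{equation*}
\sigma_1(T_{AV}) \leq \|\Lambda_{>m}\|_2 \cdot \sigma_1(T_V) \cdot \|\Lambda_{\leq m}^{-1}\|_2 = |\lambda_{m+1}| \cdot \sigma_1(T_V) \cdot \frac{1}{|\lambda_m|}.
\end{equation*}
Invoking \eqref{eq:tan_princ_angle} twice (once for $V$ and once for $AV$) translates this bound into the stated inequality on tangents of principal angles.

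The only subtle point is justifying the eigendecomposition step $\Psi_{\leq m}^{\top} A = \Lambda_{\leq m}\Psi_{\leq m}^{\top}$, which relies on the orthonormality of the eigenbasis $\{\psi_j\}$ assumed at the outset of \cref{sec:power_iter_conv}; for a non-normal $A$ with non-orthogonal eigenvectors, the same algebraic identity for $T_{AV}$ still holds but the norm bound would pick up a condition number of the eigenbasis. Otherwise the argument is a clean generalization of the power iteration bound in \cref{thm:pwr_conv_no_trunc}, with the ratio $|\lambda_2/\lambda_1|$ replaced by $|\lambda_{m+1}/\lambda_m|$ and scalar magnitudes replaced by operator norms on the corresponding invariant subspaces.
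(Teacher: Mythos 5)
Your proposal is correct and follows essentially the same route as the paper's proof: both derive the identity $T_{AV} = \Lambda_{>m} T_V \Lambda_{\leq m}^{-1}$ from the eigendecomposition and then apply submultiplicativity of the spectral norm, using $\|\Lambda_{>m}\|_2 = |\lambda_{m+1}|$ and $\|\Lambda_{\leq m}^{-1}\|_2 = 1/|\lambda_m|$. Your added remark about the non-normal case is a reasonable observation but not needed here, since the orthonormal eigenbasis is assumed at the outset of the analysis.
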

The above theorem is a well-known result, see, e.g., \cite{Saad_2016}, we also provide a proof in \cref{sec:proofsub} that is consistent with the notation in the present paper.

\subsubsection{Subspace iteration with truncation}

Next we analyze subspace iteration with rank truncation, described in \cref{alg:subspace_low_rank}. This algorithm involves two sources of truncation error: the matrix-vector product in line 3 and the approximate Rayleigh-Ritz procedure in line 4. 
To evaluate the impact of these truncations on convergence, we express the rank-truncated vectors from line 3 as $z_j = Av_j + e_j^{\texttt{mv}}$ for $j=1,\ldots,m$, or equivalently in matrix form as $Z = AV + E^{\texttt{mv}}$, 
where the $j$th column of $E^{\texttt{mv}}$ is the truncation error $e_j^{\texttt{mv}}$ from the approximate matrix-vector product. In line 4, the rank-truncated Rayleigh-Ritz procedure (\cref{alg:TT-Rayleigh-Ritz}) is performed, which yields the updated basis vectors $v'_k = \sum_{j=1}^{m} \Phi_{jk}z_j + e^{\texttt{s}}_k$, where $e^{\texttt{s}}_k$ is the truncation error due to summation (here we have omitted normalization in line 5 of the Rayleigh-Ritz \cref{alg:TT-Rayleigh-Ritz} for simplicity as it does not introduce truncation error or affect the convergence). The updated basis vectors can also be expressed in matrix form as 
\begin{equation}
    V' =  Z \Phi + E^{\texttt{s}} 
    = AV\Phi + E^{\texttt{mv}}\Phi + E^{\texttt{s}} \\
    = AV\Phi + E,
\end{equation}
where $E=E^{\texttt{mv}}\Phi + E^{\texttt{s}}$ accounts for the total truncation error due to truncated matrix-vector products and the truncated Rayleigh-Ritz procedure. The following result is an upper bound on the truncation error matrix $E$ that guarantees one iteration of rank-truncated subspace iteration makes progress towards convergence. Note that the following result is not limited to rank-truncated subspace iteration. It is valid for any inexact subspace iteration. 

\begin{theorem} \label{thm:sub_conv_t}
    If the truncation error matrix $E$ is bounded as 
    \begin{equation} \label{eq:sub_err_bound}
        \frac{\|E\|_2}{|\lambda_m|} < \left(\frac{1}{\kappa_2(\Psi_{\leq m}^{\top} V \Phi)} - \left|\frac{\lambda_{m+1}}{\lambda_{m}}\right|\right) \frac{\norm{\Plm^\top V \Phi}_2 \cdot \norm{\Pgm^\top V \Phi}_2}{\norm{V \Phi}_2},
    \end{equation}
    where $\kappa_2(\bullet)$ returns the 2-norm condition number of its argument, 
    then 
    \begin{equation} \label{eq:angle_bnd}
    \Theta(\R(V'),\R(\Psi_{\leq m})) < \Theta(\R(V),\R(\Psi_{\leq m}))
    \end{equation}
    i.e., one iteration of inexact subspace iteration makes progress towards convergence. 
\end{theorem}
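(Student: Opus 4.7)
The plan is to mirror the proof of \cref{thm:pwr_convergence}, lifting the scalar argument to matrices via the singular-value characterization of the largest principal angle in \eqref{eq:tan_princ_angle}. First I would block-decompose $V' = A V \Phi + E$ in the orthonormal eigenbasis $[\Plm\ \Pgm]$: writing $\hat V = V \Phi$, $U = \Plm^\top \hat V$, $L = \Pgm^\top \hat V$, $F_{\leq} = \Plm^\top E$, and $F_{>} = \Pgm^\top E$, this yields
\begin{equation*}
\Plm^\top V' = \Llm U + F_{\leq}, \qquad \Pgm^\top V' = \Lgm L + F_{>},
\end{equation*}
with $\|F_{\leq}\|_2, \|F_{>}\|_2 \leq \|E\|_2$ by orthonormality, and the Pythagorean identity $\|F_{\leq} x\|^2 + \|F_{>} x\|^2 = \|E x\|^2$ holding pointwise in $x$.

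Next I would apply $\sigma_1(T_{V'}) \leq \|\Pgm^\top V'\|_2 / \sigma_m(\Plm^\top V')$. The triangle inequality bounds the numerator by $|\lambda_{m+1}| \|L\|_2 + \|E\|_2$, while factoring $\Llm U + F_{\leq} = \Llm (U + \Llm^{-1} F_{\leq})$ and applying Weyl's singular-value perturbation inequality bounds the denominator below by $|\lambda_m| \sigma_m(U) - \|E\|_2$. For the old tangent, the identity $L = T_V U$ together with submultiplicativity yields the lower bound $\sigma_1(T_V) \geq \|L\|_2 / \|U\|_2$.

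The final step is to rearrange the assumed bound on $\|E\|_2$ so that the resulting upper estimate for $\sigma_1(T_{V'})$ is strictly smaller than this lower estimate for $\sigma_1(T_V)$, through algebraic manipulations paralleling those in the proof of \cref{thm:pwr_convergence}: move the $\|E\|_2$ contributions across the inequality, factor out the spectral gap, and divide by the appropriate positive quantity. Monotonicity of $\tan$ on $[0, \pi/2)$ then converts the strict inequality of tangents into \eqref{eq:angle_bnd}.

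The principal difficulty is producing the factor $\|V \Phi\|_2$ in the denominator of the theorem's bound rather than the looser $\|\Plm^\top V \Phi\|_2 + \|\Pgm^\top V \Phi\|_2$ that a direct application of the triangle inequality naturally yields. Obtaining this sharper factor appears to require a pointwise optimization in $x$ that exploits both the Pythagorean identity $\|\hat V x\|^2 = \|U x\|^2 + \|L x\|^2$ and its counterpart $\|F_{\leq} x\|^2 + \|F_{>} x\|^2 = \|E x\|^2$, in effect carrying out the geometric "radius of the ball" argument of \cref{fig:power_conv}(a) at the subspace level rather than bounding $\|F_{\leq}\|_2$ and $\|F_{>}\|_2$ individually.
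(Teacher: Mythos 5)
Your outline reproduces the skeleton of the paper's proof: the same block decomposition of $T_{V'}$ in the eigenbasis, the same bound $\sigma_1(T_{V'})\le \sigma_1(\Lgm L + \Pgm^\top E)/\sigma_m(\Llm U + \Plm^\top E)$ with $U=\Plm^\top V\Phi$, $L=\Pgm^\top V\Phi$, the same perturbation bounds on numerator and denominator, and the same lower bound $\sigma_1(T_V)\ge \norm{L}_2/\norm{U}_2$. The genuine gap is the one you flag yourself and then leave unresolved. Because you replace both $\norm{\Pgm^\top E}_2$ and $\norm{\Plm^\top E}_2$ by $\norm{E}_2$ at the outset, the rearrangement you describe can only yield the sufficient condition $\norm{E}_2 < \bigl(|\lambda_m|/\kappa_2(U) - |\lambda_{m+1}|\bigr)\,\norm{U}_2\norm{L}_2/(\norm{U}_2+\norm{L}_2)$. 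Since $\norm{V\Phi}_2\le\norm{U}_2+\norm{L}_2$, this right-hand side is smaller than that of \eqref{eq:sub_err_bound}, so the theorem's hypothesis is not shown to imply \eqref{eq:angle_bnd}; what you prove is a strictly weaker statement.

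The paper closes this distance differently from your proposed "pointwise optimization in $x$": it keeps the two error blocks separate, carrying $\norm{\Pgm^\top E}_2$ through the numerator and $\norm{\Plm^\top E}_2$ through the denominator, clears denominators to reach the bilinear condition $\norm{U}_2\norm{\Pgm^\top E}_2+\norm{L}_2\norm{\Plm^\top E}_2 < \bigl(|\lambda_m|\sigma_m(U)-|\lambda_{m+1}|\norm{U}_2\bigr)\norm{L}_2$, and then applies Cauchy--Schwarz to the $\mathbb{R}^2$ vectors $\bigl(\norm{U}_2,\norm{L}_2\bigr)$ and $\bigl(\norm{\Pgm^\top E}_2,\norm{\Plm^\top E}_2\bigr)$, identifying their Euclidean lengths with $\norm{V\Phi}_2$ and $\norm{E}_2$. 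Your suggested fix would not recover this: the inequality it would have to produce, $\norm{U}_2\norm{\Pgm^\top E}_2+\norm{L}_2\norm{\Plm^\top E}_2\le\norm{V\Phi}_2\norm{E}_2$, cannot be obtained by evaluating the Pythagorean identities at a single maximizing $x$, since the four spectral norms are attained at different vectors; indeed for $m\ge 2$ one can have all six norms equal to one (e.g., $V\Phi$ and $E$ with orthonormal columns split across the two blocks), making the left side twice the right. Your instinct that this is the delicate point is sound -- the identification $\sqrt{\norm{U}_2^2+\norm{L}_2^2}=\norm{V\Phi}_2$ used in the paper is an equality only in special cases such as $m=1$ and is in general only "$\ge$" -- but as written your proposal stops short of establishing the bound \eqref{eq:sub_err_bound} as stated.
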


\begin{proof}
The left-hand side of the inequality \eqref{eq:angle_bnd} is the largest singular value of the matrix $T_{V'}$ which we write as 
\begin{equation} \label{eq:perturbed-T-matrix}
\begin{aligned}
T_{V'}
&= 
\Pgm^\top(AV\Phi+E) \left[\Plm^\top\left(AV\Phi+E\right)\right]^{-1} 
\\
&= 
\left(\Lgm V_{>m} \Phi + E_{>m}\right) \left(\Llm V_{\leq m} \Phi + E_{\leq m}\right)^{-1},
\end{aligned}
\end{equation}
where $V_{\leq m} := \Psi_{\leq m}^{\top} V$,  $V_{>m} := \Psi_{>m}^{\top} V$,
$E_{\leq m} = \Plm^\top E$, and $E_{>m} = \Pgm^\top E$. 

We are to show that inequality \eqref{eq:sub_err_bound} implies $\sigma_1(T_{V'}) < \sigma_1(T_V)$. We begin by bounding $\sigma_1(T_{V'})$ from above using the inequality $\sigma_1(XY) \leq \sigma_1(X) \sigma_1(Y)$ for any appropriately sized matrices $X$ and $Y$ \cite{wang1992svBounds}. Applying such inequality to \eqref{eq:perturbed-T-matrix} we have 
\begin{equation} \label{eq:perturbed-error-bnd1}
\begin{aligned}
\sigma_1(T_{V'})
&\le
\sigma_1 (\Lgm V_{>m} \Phi + E_{>m}) \cdot \sigma_1 \left( (\Llm V_{\leq m} \Phi + E_{\leq m})^{-1}  \right)
\\
&=
\frac{\sigma_1 \left(\Lgm V_{>m} \Phi + E_{>m}\right)}{\sigma_m \left( \Llm V_{\leq m} \Phi + E_{\leq m} \right)},
\end{aligned}
\end{equation}
where the equality in the second line follows from the identity $\sigma_1(X^{-1}) = 1 / \sigma_m(X)$ for any invertible $X \in \mathbb{C}^{m \times m}$. 
To proceed, we bound the numerator of \eqref{eq:perturbed-error-bnd1} from above 
\begin{equation}
\begin{aligned}
\sigma_1\left(\Lgm V_{>m} \Phi + E_{>m}\right) 
&\leq \sigma_1\left(\Lgm V_{>m} \Phi\right) + \norm{E_{>m}}_2 \\
&\leq 
\norm{\Lambda_{>m}}_2 \norm{V_{>m} \Phi}_2 + \norm{E_{>m}}_2 \\
&\leq |\lambda_{m+1}| \cdot \norm{V_{>m} \Phi}_2 + \norm{E_{>m}}_2 
\end{aligned}
\end{equation}
where the first inequality is due to \cite[Theorem 1]{stewart1998perturbation}. 
We also bound the denominator of \eqref{eq:perturbed-error-bnd1} from below 
\begin{equation} \label{eq:denom_bound}
\begin{aligned}
\sigma_m\left(\Llm V_{\leq m} \Phi + E_{\leq m}\right) 
&\geq \sigma_m\left(\Llm V_{\leq m} \Phi\right) - \norm{E_{\leq m}}_2 \\
&\geq \sigma_m(\Llm)\ \sigma_m(V_{\leq m} \Phi)
 - \norm{E_{\leq m}}_2 \\
&\geq |\lambda_m| \cdot \sigma_m(V_{\leq m} \Phi) - \norm{E_{\leq m}}_2
\end{aligned}
\end{equation} 
where the first inequality is once again due to \cite[Theorem 1]{stewart1998perturbation}. Combining inequalities \eqref{eq:perturbed-error-bnd1}-\eqref{eq:denom_bound} yields 
\begin{equation} \label{bnd:sigma-Tvp-with-SVs}
\sigma_1(T_{V'}) \leq \frac{|\lambda_{m+1}| \cdot \norm{V_{>m} \Phi}_2 + \norm{E_{>m}}_2}{|\lambda_m| \cdot \sigma_m(V_{\leq m} \Phi) - \norm{E_{\leq m}}_2}. 
\end{equation}
We now bound $\sigma_1(T_V)$ from below 
\begin{equation} \label{eq:sTv_bnd}
\begin{aligned}
\sigma_1(T_V) &= \sigma_1\left((V_{>m}\Phi) (V_{\leq m}\Phi)^{-1}\right) \\ 
&\geq \sigma_1(V_{>m} \Phi) \cdot \sigma_m\left((V_{\leq m}\Phi)^{-1}\right) 
= \frac{\sigma_1(V_{>m} \Phi)}{\sigma_1(V_{\leq m}\Phi)} 
= \frac{\norm{V_{>m} \Phi}_2}{\norm{V_{\leq m} \Phi}_2}, 
\end{aligned}
\end{equation}
where the inequality is due to \cite{wang1992svBounds}. 
Using inequalities \eqref{bnd:sigma-Tvp-with-SVs} and \eqref{eq:sTv_bnd}, for $\sigma_1(T_{V'}) < \sigma_1(T_V)$ it is sufficient that 
\begin{equation} 
\frac{|\lambda_{m+1}| \cdot \norm{V_{>m} \Phi}_2 + \norm{E_{>m}}_2}{|\lambda_m| \cdot \sigma_m(V_{\leq m} \Phi) - \norm{E_{\leq m}}_2}
<
\frac{\norm{V_{>m} \Phi}_2}{\norm{V_{\leq m} \Phi}_2}.
\end{equation}
Multiplying the preceding inequality by $\norm{V_{\leq m} \Phi}_2(|\lambda_m| \sigma_m(V_{\leq m} \Phi) - \norm{E_{\leq m}}_2)$ we obtain 
\begin{equation}
\begin{aligned}
&\norm{V_{\leq m} \Phi}_2 \left(|\lambda_{m+1}| \norm{V_{>m} \Phi}_2 + \norm{E_{> m}}_2\right)  < \norm{V_{>m} \Phi}_2 \left(|\lambda_m| \sigma_m(V_{\leq m} \Phi) - \norm{E_{\leq m}}_2\right),
\end{aligned}
\end{equation}
and rearranging terms yields 
\begin{equation} \label{eq:suff_sub_conv}
\begin{aligned}
& \norm{V_{\leq m} \Phi}_2 \norm{E_{> m}}_2 + \norm{V_{>m} \Phi}_2 \ \norm{E_{\leq m}}_2 \\
&\qquad < \left(|\lambda_{m}| \sigma_m(V_{\leq m} \Phi) - |\lambda_{m+1}|\norm{V_{\leq m} \Phi}_2 \right) \norm{V_{>m} \Phi}_2. 
\end{aligned}
\end{equation}
The left-hand side of the preceding inequality is the inner product 
$$
\norm{V_{\leq m} \Phi}_2 \ \norm{E_{>m}}_2 + \norm{V_{>m} \Phi}_2 \ \norm{E_{\leq m}}_2 
= 
\left\langle \begin{bmatrix}
\norm{V_{\leq m} \Phi}_2 \\ \norm{V_{>m} \Phi}_2
    \end{bmatrix}
,
    \begin{bmatrix}
    \norm{E_{>m}}_2 \\ \norm{E_{\leq m}}_2
    \end{bmatrix} 
\right\rangle,
$$
which we bound using the Cauchy-Schwarz inequality
\begin{equation} \label{eq:Cauchy-Schwarz} 
\begin{aligned}
\norm{V_{\leq m} \Phi}_2 \norm{E_{>m}}_2 + \norm{V_{>m} \Phi}_2 \ \norm{E_{\leq m}}_2  &\leq
\norm{ 
    \begin{bmatrix}
    \norm{V_{\leq m} \Phi}_2 \\ \norm{V_{>m}\Phi}_2
    \end{bmatrix}
}_2 
\norm{ 
    \begin{bmatrix}
    \norm{E_{>m}}_2 \\ \norm{E_{\leq m}}_2
    \end{bmatrix}
}_2 \\ 
&= \norm{V \Phi}_2 \norm{E}_2. 
\end{aligned}    
\end{equation}
Combining \eqref{eq:suff_sub_conv} and \eqref{eq:Cauchy-Schwarz}, a sufficient condition for $\sigma_1(T_{V'})<\sigma_1(T_V)$ is 
$$
\norm{V \Phi}_2\norm{E}_2 < \left(|\lambda_m| \sigma_m(V_{\leq m} \Phi) - |\lambda_{m+1}| \norm{V_{\leq m} \Phi}_2\right) \norm{V_{>m} \Phi}_2  
$$
or equivalently, dividing by $\norm{V \Phi}_2$ and factoring $\norm{V_{\leq m} \Phi}_2$ from the right-hand side 
\begin{equation}
\norm{E}_2 < \left(\frac{|\lambda_m|}{\kappa_2(V_{\leq m} \Phi)} - |\lambda_{m+1}|\right) \frac{\norm{V_{\leq m} \Phi}_2 \norm{V_{>m} \Phi}_2}{\norm{V \Phi}_2}
\label{bnd:subspace-single-step}
\end{equation}
where $\kappa_2(V_{\leq m} \Phi) = \sigma_1(V_{\leq m} \Phi) / \sigma_m(V_{\leq m} \Phi)$. Finally dividing the preceding inequality by $|\lambda_{m}|$ we arrive at \eqref{eq:sub_err_bound}. 
\end{proof}

\vs
\noindent
Note that in \eqref{eq:sub_err_bound} we bound the truncation error matrix relative to the magnitude of the $m$-th eigenvalue of $A$. This relative error is appropriate because the error matrix arises from truncating vectors that have been multiplied by $A$. The upper bound depends on the eigenvalue ratio $|\lambda_{m+1} / \lambda_m|$, with a larger eigenvalue gap allowing for greater truncation error. 
The bound also depends on the condition number of the matrix $\Psi_{\leq m}^{\top} V \in \mathbb{C}^{m \times m}$, which represents the projection of the approximate subspace $V$ onto the target subspace $\Psi_{\leq m}$. Near convergence, this projection matrix approaches the identity, and consequently, its condition number approaches $1$. 
Additionally, the bound includes $\|\Psi_{>m}^{\top} V\|_2$ and $\|\Psi_{\leq m}^{\top} V\|_2$. When the algorithm is far from convergence, the latter is small. Conversely, when close to convergence, the former becomes small. 

If the eigenvectors of interest $\psi_1, \ldots, \psi_m$ have rank less than or equal to $\bm{r}$, the truncation error $E^{\texttt{mv}}$ resulting from the matrix-vector products in line 3 of the truncated subspace iteration (\cref{alg:subspace_low_rank}) naturally decreases as the method converges. Similarly, the rank truncation in line 4 of the low-rank Rayleigh-Ritz procedure (\cref{alg:TT-Rayleigh-Ritz}) introduces error that also decreases as the subspace iteration converges. This is because $v_j$'s converge to $\psi_j$'s, for $j=1,2,...,m$, which are naturally orthogonal for symmetric $A$. As a result, the linear combinations in the Rayleigh-Ritz step involve fewer terms of significant magnitude, thereby reducing truncation error. 

\section{Computational cost and implementation}
\label{sec:cost_and_imp}
In this section, we estimate the computational cost and discuss several practical issues that are important for efficient implementations of subspace iteration with rank truncation. 
\subsection{Computing TT matrix-vector products}
\label{sec:cost_subspace} 
The computational cost of the low-rank subspace iteration (\cref{alg:subspace_low_rank}) depends primarily on the rank of $A$ as a low-rank operator and the ranks used to represent vectors during each iteration. Assume $A$ is represented as a TT-matrix with ranks uniformly equal to $r_A$, while the Ritz vectors $v_j^{(i)}$ and all intermediate vectors, such as $z_j^{(i)}$, are represented with ranks uniformly equal to $r$. 
Another key factor influencing computational cost is the method used to approximate matrix-vector products. When matrix-vector products are approximated using TT-SVD, the product $Av_j^{(i)}$ has a rank of $r_A \cdot r$. Consequently, performing each truncated matrix-vector product in line 3 of \cref{alg:subspace_low_rank} incurs a cost of $\mathcal{O}(dnr_A^3r^3)$, assuming the mode sizes are uniformly equal to $n$. On the other hand, if the matrix-vector product is approximated by projecting onto the tangent space of the low-rank TT manifold at $v_j^{(i-1)}$, the computational cost is reduced to $\mathcal{O}(dnr^3)$. This approach is significantly more efficient, paricularly when $A$ has moderate to large ranks, as it avoids the higher costs associated with applying TT-SVD to high-rank TTs.

\subsection{Computing linear combination of low-rank TTs} 
\label{sec:randtruncation}
In the Rayleigh-Ritz step (\cref{alg:TT-Rayleigh-Ritz}), approximating each linear combination in line 4 using the TT-SVD approach involves applying a truncation after each addition, resulting in $m-1$ truncations. Each truncation has a computational cost of $\mathcal{O}(dnr^3)$ and the ordering of the summands can affect the final result. Alternatively, approximating the entire sum directly on the tangent space of the TT manifold at $z_k$ requires a single orthogonal projection, which costs $\mathcal{O}(dnr^3)$. For a large number of summands $m$, the tangent space projection becomes significantly more efficient. 

Another possibility to accelerate low-rank approximations of linear combinations of several TTs is to use randomized algorithms \cite{Daas2021}. These algorithms take advantage of the block structured cores of TT sums, and are particularly efficient when truncating large linear combinations of tensors in the TT format. Randomized rounding can be readily employed to speed up the Rayleigh Ritz procedure during TT subspace iteration, and further acceleration can be achieved by reusing sketches for each linear combination. 

\subsection{Eigenvector locking} 
\label{sec:locking}
A common strategy to enhance the efficiency of subspace iteration, referred to as locking, is to halt updates for a Ritz vector once it has converged. In low-rank subspace iteration, constructing the Ritz vectors \eqref{eq:truncated_ritz_vecs} at each iteration involves a linear combination of $m$ TTs. This process can be computationally expensive due to the rank increase that results from summing multiple TTs. To mitigate some of this cost, the locking strategy can be further extended. 
It is important to note that not all summands contribute equally to the resulting Ritz vector. As the Ritz vectors converge, some coefficients $\Phi_{ij}$ may fall below the tolerance used for low-rank truncations. In such cases, excluding vectors with small coefficients from the sum can significantly reduce computational costs by limiting unnecessary rank growth and truncation operations. By omitting these negligible terms, low-rank subspace subspace iteration can be more efficient without compromising accuracy or convergence.

\section{Numerical results}
\label{sec:numerics} 
In this section, we present several numerical examples to demonstrate the efficiency and effectiveness of the rank truncated subspace iteration and compare it with both the rank truncated Lanczos algorithm and the DMRG algorithm for computing several algebraically smallest eigenvalues. To initialize the linear transformation defining the Chebyshev polynomial \eqref{eq:linear_trans} we estimate the largest eigenvalue using a few rank-truncated Lanczos iterations with safeguards as outlined in \cite{spec_bound} and set $b$ equal to such approximation. We initialize $a=-1$ and update $a$ to be the largest Ritz value after each subspace iteration. 

\subsection{Test problems}
We test the algorithms discussed above using three problems. 
\begin{enumerate}
\item \textit{Heisenberg Hamiltonians}. 
We consider Heisenberg Hamiltonians with an external magnetic field of the form 
\begin{equation}
\label{Heis}
H = \sum_{j=1}^L -J \left( \sigma_j^x \sigma_{j+1}^x + \sigma_j^y \sigma_{j+1}^y + \sigma_j^z \sigma_{j+1}^z\right) - h  \sigma_j^z,
\end{equation} 
where $\sigma_j^{a} = I^{\otimes j-1} \otimes \sigma^{a} \otimes I^{\otimes L-j}$ for $a \in \{x,y,z\}$ denotes spin-operators at site $j$ of a $1$-dimensional lattice of length $L$. 
We consider the spin-$1/2$ model, where $I$ denotes the $2 \times 2$ identity matrix, and $\sigma^a$ denotes $2 \times 2$ Pauli matrices. When $J,h=1$ the eigenvectors associated with the smallest algebraic eigenvalues (the ground state and the first few excited states) exhibit a low TT rank. 

We also examine the spin-1 model with periodic boundary conditions, where $I$ is the $3 \times 3$ identity matrix, and $\sigma^a$ corresponds to the $3 \times 3$ Pauli matrices. In this case, the ground state and excited states generally exhibit a higher TT rank. Moreover, the long-range interactions introduced by the periodic boundary conditions are known to cause convergence difficulties for the DMRG method when computing these eigenvectors. 

\item The \textit{Laplacian operator} in a $d$-dimensional space is defined by
\begin{equation} \label{eq:laplacian-operator}
    \Delta = -\left(D \otimes I \otimes \cdots \otimes I + \cdots + 
             I \otimes \cdots \otimes I \otimes D\right), 
\end{equation}
where $I$ is the identity matrix of size $n \times n$ and $D=\text{tridiag}(1,-2,1)$ is the $n \times n$ second derivative finite difference discretization. To compare truncated subspace iteration with Lanczos method, we considered $d=3$ and $n = 16$. To illustrate the analysis of truncated power/subspace iteration in \cref{sec:convergence}, we considered $d=2$, $n=32$. In both cases the eigenvectors corresponding to the smallest eigenvalues can be represented exactly using low-rank TTs. 
%
\item \textit{A Hamiltonian with a H\'enon-Heiles potential} can be written as
\begin{equation}
    \label{henon_heiles}
    H = -\frac{1}{2} \Delta 
        + \frac{1}{2} \sum_{k=1}^d q_k^2 
        + \mu \sum_{k=1}^{d-1}\left( 
        q_k^2 q_{k+1} - \frac{1}{3} q_{k+1}^3
        \right), 
\end{equation}
where $\mu$ controls the contribution of the anharmonic term. Following \cite{dolgov2014} we set $\mu=0.111803$ and discretize $H$ using spectral collocation with a tensor product grid based on the zeros of the $n$th Hermite polynomial. We set $d=5$ and $n=28$. The low-lying eigenvectors of the H\'enon-Heiles Hamiltonian \eqref{henon_heiles} exhibit higher ranks compared to the low-lying eigenvectors of the spin-1/2 Heisenberg Hamiltonian and the Laplacian operator. This makes the H\'enon-Heiles system well-suited for assessing the performance of the truncated subspace iteration when the truncation rank is smaller than the rank of the eigenvectors being approximated. 

\end{enumerate}

Each of the test problem Hamiltonians are written above in the canonical polyadic (CP) format \eqref{eq:sum_of_krons}. In order to apply the algorithms described above, each Hamiltonian is first converted from CP format to TT-matrix format. 

\subsection{Comparing subspace iteration and Lanczos method} 

\begin{figure}[tbhp]
\centering
\includegraphics[scale=0.20]{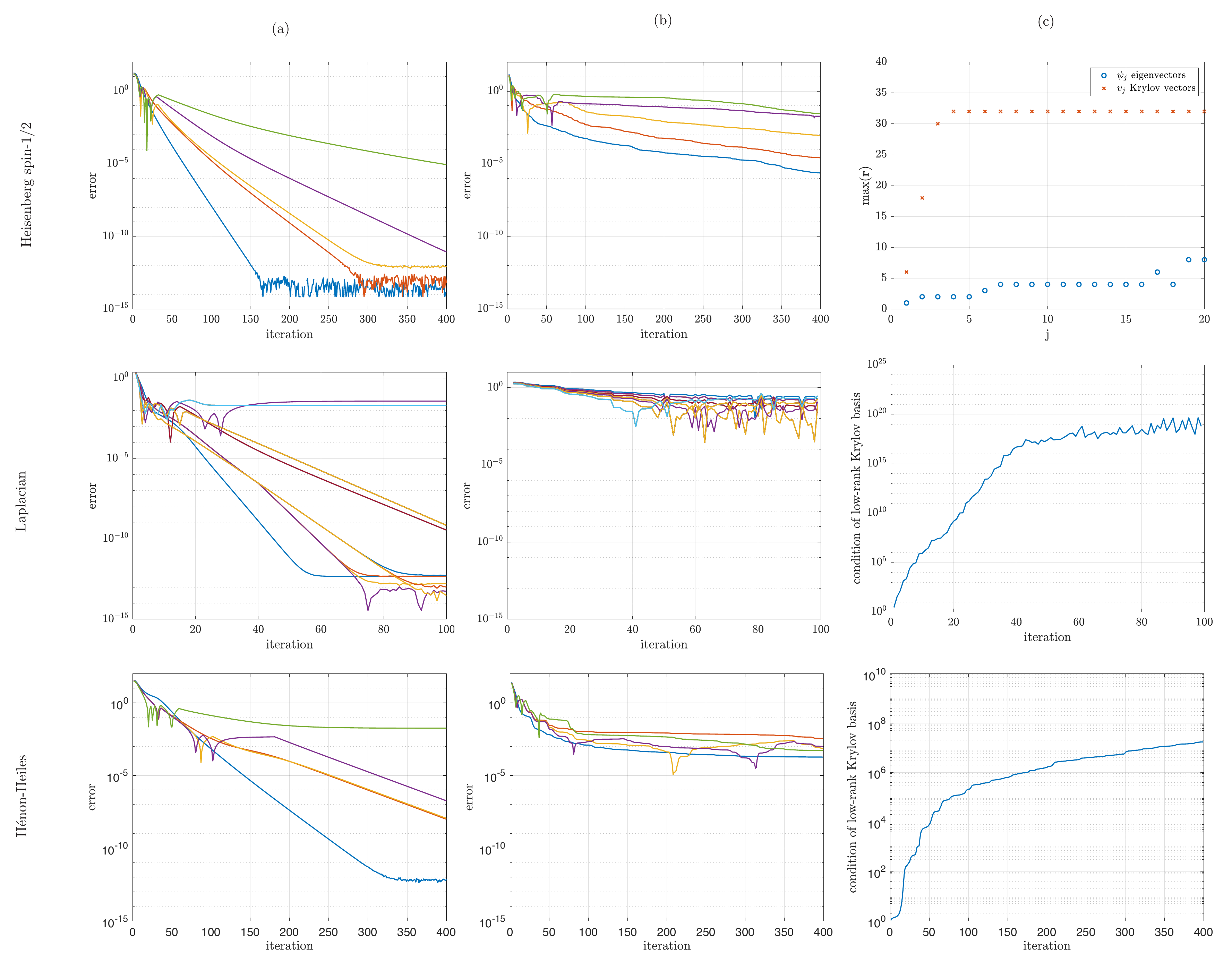}
\caption{Comparison of convergence of the rank-truncated subspace iteration and Lanczos method for each test problem \eqref{Heis}-\eqref{henon_heiles}. 
Top row: Heisenberg Hamiltonian \eqref{Heis} with $L=10$. The errors of the first five Ritz values are shown in column (a) for subspace iteration and column (b) for Lanczos method, both using a fixed truncation rank of 6. For the subspace iteration, subspace dimension $m=5$ and polynomial filter of degree $k=2$ were used. Shown in the top row of column (c) are the ranks needed to represent the exact eigenvectors $\psi_j$ (blue) and the orthogonal Krylov basis vectors $v_j$ with accuracy $10^{-10}$ in the Frobenius norm (red). Also shown are the condition numbers of the approximate low-rank Lanczos basis at each iteration. 
Middle row: Laplacian with $d=3$, $n=16$, truncation rank $1$. The second row of column (c) shows the condition number of the low-rank approximate Krylov basis at each iteration. 
Bottom row: Hamiltonian with H\'enon Heiles potential with $d=3$, $n=16$ and truncation rank 10. Column (c) shows the condition number of the low-rank approximate Krylov basis at each iteration. 
}
\label{fig:heis_krlv_sub_eig_conv}
\end{figure}

We compared the rank truncated subspace iteration (\cref{alg:truncated_power}) and Lanczos method (\cref{alg:TT-Lanczos}) for computing the first several smallest eigenvalues of each test problem \eqref{Heis}-\eqref{henon_heiles}. For \eqref{Heis} we set $L=10$ resulting in $H$ with size $2^{10} \times 2^{10}$, small enough to compute eigenvalues and eigenvectors using standard methods, which serve as benchmarks for evaluating the low-rank methods. \cref{fig:heis_krlv_sub_eig_conv} shows the error versus iteration for the first $5$ Ritz values computed by the low-rank subspace iteration and the low-rank Lanczos method. While the convergence Lanczos method stagnates around $10^{-3}$, the subspace iteration achieves convergence to machine precision. 
We also ran the Lanczos method without rank truncation to obtain the orthogonal Krylov basis vectors that the rank-truncated Lanczos method aims to approximate. In \cref{fig:heis_krlv_sub_eig_conv}(c), we plot the maximum of the TT-rank vector required to represent the first $20$ Krylov basis vectors with accuracy $10^{-10}$. After just two iterations, this rank exceeds $30$ indicating that these Krylov basis vectors can not be efficiently represented with small TT rank. Furthermore, Krylov methods are known to be sensitive to large errors in the early iterations, indicating that the stagnation in convergence arises from the inability to accurately represent the orthogonal Krylov basis vectors as low-rank TTs. 
In contrast, the subspace iteration does not encounter convergence issues due to rank truncation. Unlike the Krylov basis vectors, the ground state and first excited states can be effectively represented as low-rank TTs, as shown in the right panel of \cref{fig:heis_krlv_sub_eig_conv}. Moreover the subspace iteration is robust to large truncation errors in the initial iterations. 

For the Laplace \eqref{eq:laplacian-operator} and Hamiltonian with H\'enon-Heiles potential \eqref{henon_heiles} we set $d=3$ and $n = 16$ resulting in problems of size $16^3 \times 16^3$, small enough to compute eigenvalues and eigenvectors using standard methods, which serve as benchmarks for evaluating the low-rank methods. \cref{fig:heis_krlv_sub_eig_conv} shows the error versus iteration for the first several Ritz values computed by the low-rank subspace iteration and the low-rank Lanczos method. Once again the convergence Lanczos method stagnates around $10^{-3}$, while the subspace iteration achieves convergence to machine precision for eigenvectors that admit low-rank representations. In column (c), we plot the condition number of the approximate low-rank Krylov basis at each Lanczos iteration, and observe that they increase significantly with respect to the iteration number. Such observation partially explains why the convergence of the rank truncated Lanczos iteration stagnates.

\subsection{Comparing subspace iteration and DMRG} 
\begin{figure}[tbhp]
\centering
\includegraphics[scale=0.25]{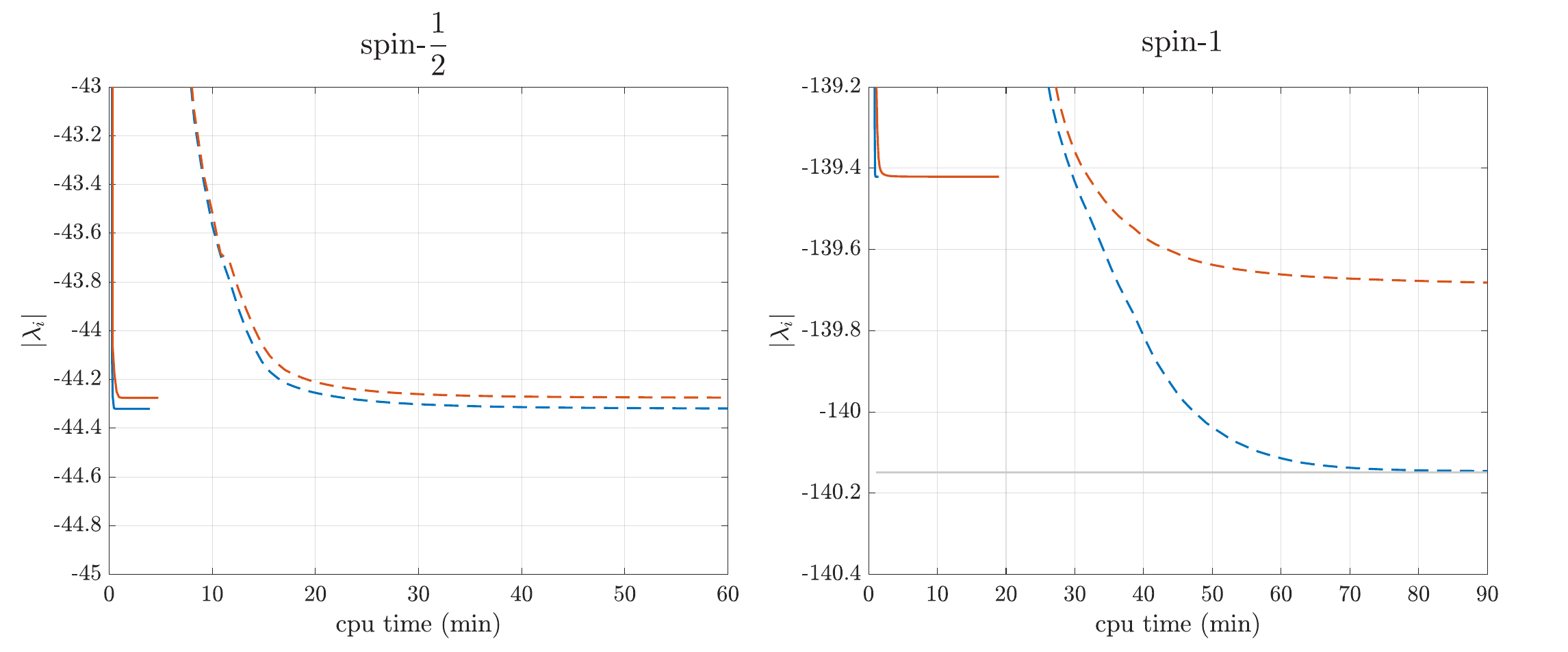}
\caption{Ground state (blue) and first excited state (red) energies of spin-1/2 (left) and spin-1 (right) Heisenberg Hamiltonians \eqref{Heis} with $L=100$, $J=1$, $h=0$ and periodic boundary conditions. We computed the energies using rank-truncated subspace iteration (dashed line) and DMRG method (solid line) and plot the results versus CPU-time. On the right we show the reference ground state energy -140.14840390392 \cite{White2005}. 
}
\label{fig:dmrg_sub_comp}
\end{figure}
We now compare rank-truncated subspace iteration with DMRG on spin-1/2 and spin-1 Heisenberg Hamiltonian \eqref{Heis} with $L=100$, no external magnetic field ($h=0$), $J=-1$ and periodic boundary conditions. 
We tested the DMRG implementation block2 \cite{zhai2023block2} and the Matlab TT-Toolbox implementation \cite{dolgov2014}. Both converged to the same energies and the results reported below were obtained with block2. 
We computed the ground state and first excited state energies (two algebraically smallest eigenvalues) using subspace iteration and DMRG method and plot the energies versus CPU-time in \cref{fig:dmrg_sub_comp}. In both cases we used a maximum rank of 100 for all vectors and both methods are initialized with the same random initial vectors. 
For the spin-1/2 system, the DMRG method and subspace iteration converge to the same eigenvalues with DMRG converging significantly faster. 
For the spin-1 system, DMRG also converges faster but gets stuck in a local minimum, a behavior that has been previously reported \cite{White2005}. In contrast, subspace iteration achieves lower energies, with the ground state energy converging to the reference value. 
Note that the subspace correction for DMRG proposed in \cite{White2005} converges to the ground state for the spin-1 system considered here. The correction enriches the local MPS basis of the current iterate $v$ with components of $Av$, similar to power iteration which uses $\mathfrak{T}_{\bm r}(Av)$ as the updated vector.

\subsection{Demonstration of acceptable truncation error} 

To demonstrate the upper bounds for truncation error in \eqref{eq:err_bnd1} and \cref{thm:pwr_convergence}, \cref{thm:sub_conv} which ensure truncated subspace iteration converges, we considered the Laplacian \eqref{eq:laplacian-operator} with $d=2$ and $n=32$. We used a Chebyshev polynomial filter of degree $k=8$, resulting in an eigenvalue ratio $|p_k(\lambda_2)/p_k(\lambda_1)|\approx 0.6807$. At each iteration we computed the upper bound in \eqref{eq:err_bnd1} for the acceptable truncation error and adaptively selected the rank using TT-SVD so that \eqref{eq:err_bnd1} is satisfied. \cref{fig:laplace_2d_conv}(a) shows the computed upper bound for the truncation error and the actual truncation error achieved using the rank selected by the TT-SVD truncation algorithm. \cref{fig:laplace_2d_conv}(b) shows the rank selected by the TT-SVD truncation at each iteration. In early iterations, achieving the small truncation errors demands a larger rank. This is because applying the operator $p_k(A)$ to a random initial vector generates a vector that require higher ranks for an accurate representation. As the algorithm converges, small truncation errors are again required; however, this can be achieved with a smaller rank since the dominant eigenvector $\psi_1$ has rank $1$. Finally, \cref{fig:laplace_2d_conv}(c) shows the angle $\theta$ of the truncated power iteration vector with the dominant eigenvector $\psi_1$, demonstrating that the method is converging. 

\begin{figure}[tbhp]
\centering
\includegraphics[scale=0.20]{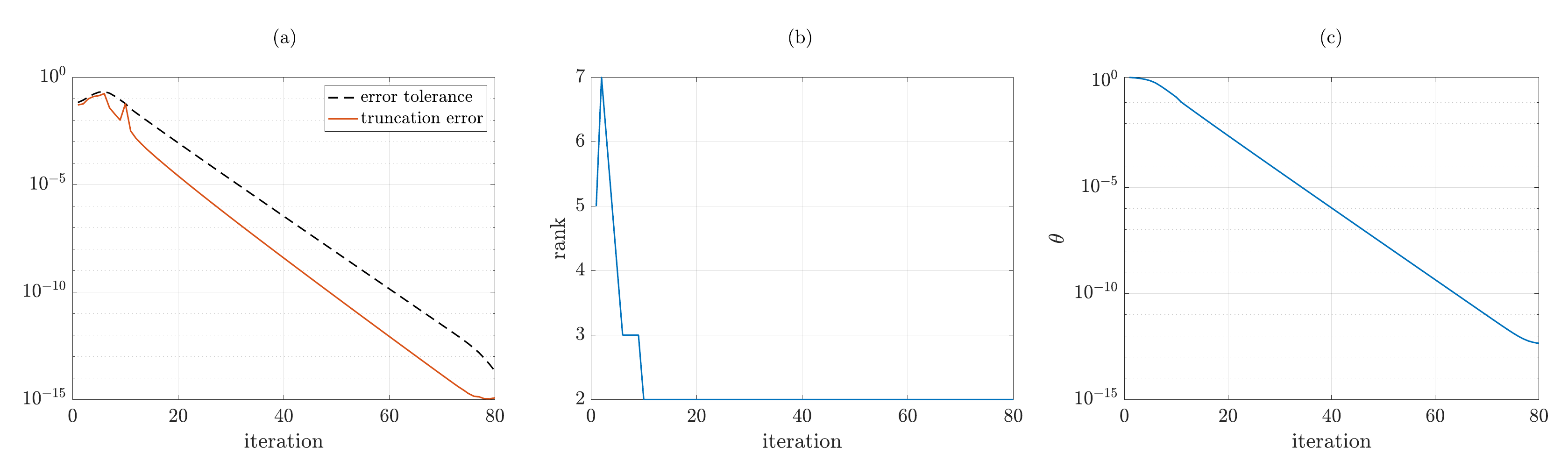} 
\caption{
Numerical demonstration of the sufficient condition for convergence of truncated power iteration provided in \eqref{eq:err_bnd1} for Laplacian \eqref{eq:laplacian-operator} with polynomial acceleration. We set dimension $d=2$, mode sizes $n=32$, and polynomial filter degree $k=8$ with appropriately chosen linear map \eqref{eq:linear_trans}, which yields an eigenvalue ratio $|p_k(\lambda_2)/p_k(\lambda_1)|\approx 0.6807$. (a) Upper bound \eqref{eq:err_bnd1} used as error tolerance in TT-SVD truncation and the resulting truncation error. (b) Truncation rank adaptively selected by TT-SVD truncation to achieve truncation error smaller than the tolerance. (c) Angle between current iteration and dominant eigenvector. 
}
\label{fig:laplace_2d_conv}
\end{figure}

In \cref{fig:laplace_2d_conv_iters}, we plot the projections \eqref{eq:R2_proj} onto $\mathbb{R}^2$ of truncated power iterates 3, 8, 14 before and after applying low-rank truncation. For each of these iterations, we show the projection of the iterate $v^{(i-1)}$ (black arrow), the projection of $p_k(A)v^{(i-1)}$ which is the next iterate before truncation (blue arrow), and the projection of $v^{(i)} = \T_{\r}(p_k(A)v^{(i-1)})$ (red arrow). We also computed the upper bounds from \eqref{eq:err_bnd1} and \cref{thm:pwr_convergence}. Circles with radii corresponding to these upper bounds are shown, with the green circle corresponding to \eqref{eq:err_bnd1} and the red circle corresponding to \cref{thm:pwr_convergence}. The truncation error upper bound given in \cref{thm:pwr_convergence} is more restrictive than the bound in \eqref{eq:err_bnd1}, hence the red circle is smaller than the green circle. As the iterates converge to $\psi_1$, these two bounds become closer to each other. We selected the rank of the truncated vector $v^{(i)}$ adaptively using the TT-SVD truncation algorithm~\cite{oseledets2011tensor} to ensure that the truncation error satisfies \eqref{eq:err_bnd1}, i.e, the projected $v^{(i)}$ (red arrow) remains within the green circle. 
In this case, we observe that truncation adversely affects convergence during early iterations when $\measuredangle(v^{(i)},\psi_1)$ is close to $\pi/2$, such as iteration 3. However, when $\measuredangle(v^{(i)},\psi_1)$ is close to $\pi/4$, truncation can have a beneficial effect. Notably, in iteration 8, truncation reduces the angle between the iterate and $\psi_1$, thereby accelerating convergence during that iteration. 
In early iterations, when $\measuredangle(v^{(i)},\psi_1)$ is close to $\pi/2$, only a small truncation error can be tolerated because the application of the operator $p_k(A)$ to $v^{(i)}$ reduces the angle between $v^{(i)}$ and $\psi_1$ by only a small amount. As $\measuredangle(v^{(i)},\psi_1)$ approaches $\pi/4$, the operator $p_k(A)$ becomes more effective at decreasing the angle, allowing for larger truncation errors. Near convergence, when $\measuredangle(v^{(i)},\psi_1)$ is close to zero, small truncation errors are once again needed to maintain accuracy. This behavior aligns with our analysis in \cref{thm:pwr_convergence} and in particular is consistent with the curve shown in the right panel of \cref{fig:power_conv}.

\begin{figure}[tbhp]
\centering
\includegraphics[scale=0.20]{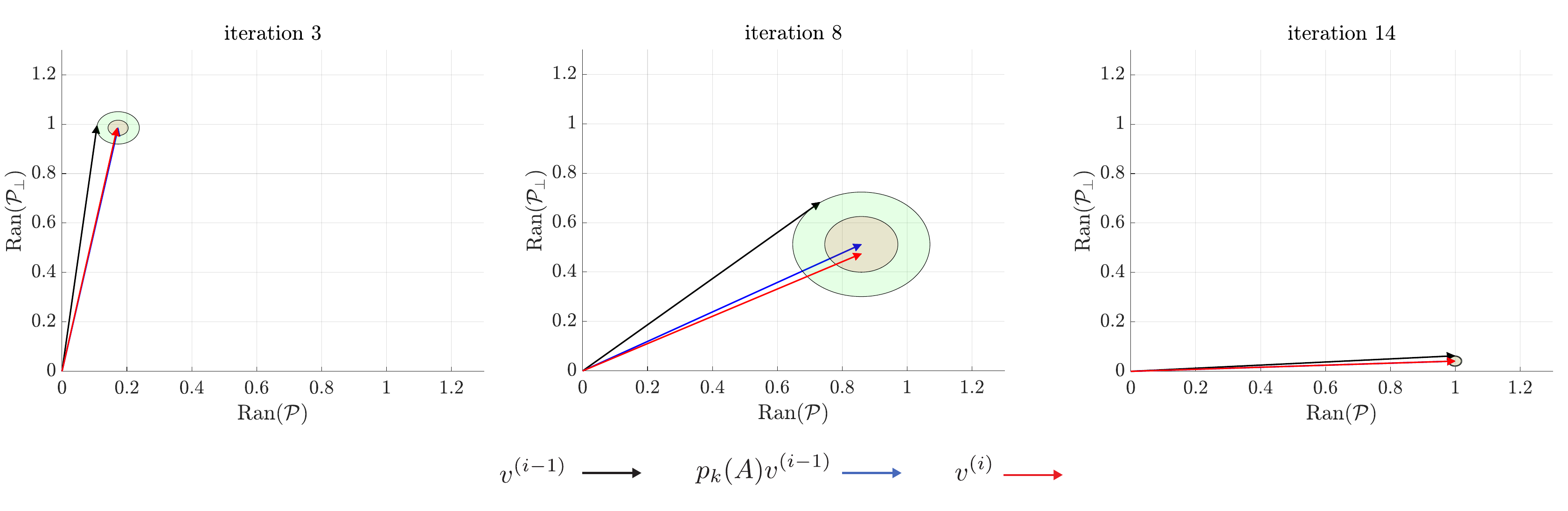} 
\caption{
Numerical demonstration of the truncation error upper bounds in \eqref{eq:err_bnd1} and \cref{thm:pwr_convergence} during the computation of the dominant eigenvector of the Laplacian \eqref{eq:laplacian-operator} with dimension $d=2$ and $n=32$ points per dimension. A Chebyshev polynomial filter of degree $8$ was used, yielding an eigenvalue ratio $|p_k(\lambda_2)/p_k(\lambda_1)|\approx 0.6807$. The figure shows the projections of iterates onto $\mathbb{R}^2$, as defined in \eqref{eq:R2_proj}, at various iterations both before and after truncation. 
}
\label{fig:laplace_2d_conv_iters}
\end{figure}

\subsection{Methods for improving efficiency} 
\label{sec:improving_eff}

\begin{table}[tbhp]
\footnotesize
\caption{Timing and convergence of rank-truncated subspace iteration for Heisenberg spin-1/2 \eqref{Heis} with $L=32$. We set maximum rank equal to $2$ and report average CPU-time per iteration and the number of iterations until the residual norm of the second eigenvector is less than $10^{-10}$. If the second eigenvector has not converged in $5000$ iterations we report its error in the residual norm.} 
\label{table:timings_heis}
\begin{center}
\begin{tabular}{ c | c | c | c | c | c}
k (poly. deg) & m (sub. dim) & avg. CPU-time per iter & \# iters & total CPU-time & residual norm \\ 
\hline 
none & 2 & 0.02s & 5000 & 100s & $8.87 \times 10^{-3}$ \\
none & 4 & 0.05s & 5000 & 250s & $8.85 \times 10^{-5}$ \\
none & 8 & 0.15s & 5000 & 750s & $1.06 \times 10^{-10}$ \\
2 & 2 & 0.03s & 5000 & 150s & $5.00 \times 10^{-3}$ \\
2 & 4 & 0.08s & 5000 & 400s & $3.90 \times 10^{-5}$ \\ 
2 & 8 & 0.21s & 3293 & 692s & $<10^{-10}$ \\
4 & 2 & 0.05s & 5000 & 250s & $9.22 \times 10^{-4}$ \\
4 & 4 & 0.12s & 5000 & 600s & $5.32 \times 10^{-9}$ \\ 
4 & 8 & 0.28s & 1331 & 373s & $<10^{-10}$ \\
8 & 2 & 0.09s & 5000 & 450s & $2.53 \times 10^{-5}$ \\
8 & 4 & 0.20s & 3106 & 621s & $<10^{-10}$ \\ 
8 & 8 & 0.44s & 681 & 300s & $<10^{-10}$ \\
\end{tabular}
\end{center}
\end{table}

To compare the choice of subspace size and polynomial filter degree in rank-truncated subspace iteration, we considered a Heisenberg spin-1/2 chain of length $L=32$ and computed the first excited state using subspaces of dimensions $m=2,4,8$ and polynomial filters of degrees $k=2,4,8$. Increasing either the subspace dimension or the polynomial filter degree results in more costly iterations. However, similar to classical subspace iteration, increasing these parameters accelerates convergence. For all experiments we truncate with a maximum rank of $2$ during each iteration. \cref{table:timings_heis} presents the average CPU-time per iteration, the number of iterations, and the residual of the first excited state. 

Finally, we compare implementations of the rank-truncation operator $\mathfrak{T}_{\bm r}$ for improving the computational efficiency of rank-truncated subspace iteration. First, we compare randomized algorithms with TT-SVD truncation to compute low-rank approximations of linear combinations in the Rayleigh-Ritz step as described in \cref{sec:randtruncation}. Second, we compare tangent space projections with TT-SVD for approximating matrix-vector products and linear combinations. 

\begin{figure}[t]
\centering
\includegraphics[width=0.78\textwidth]{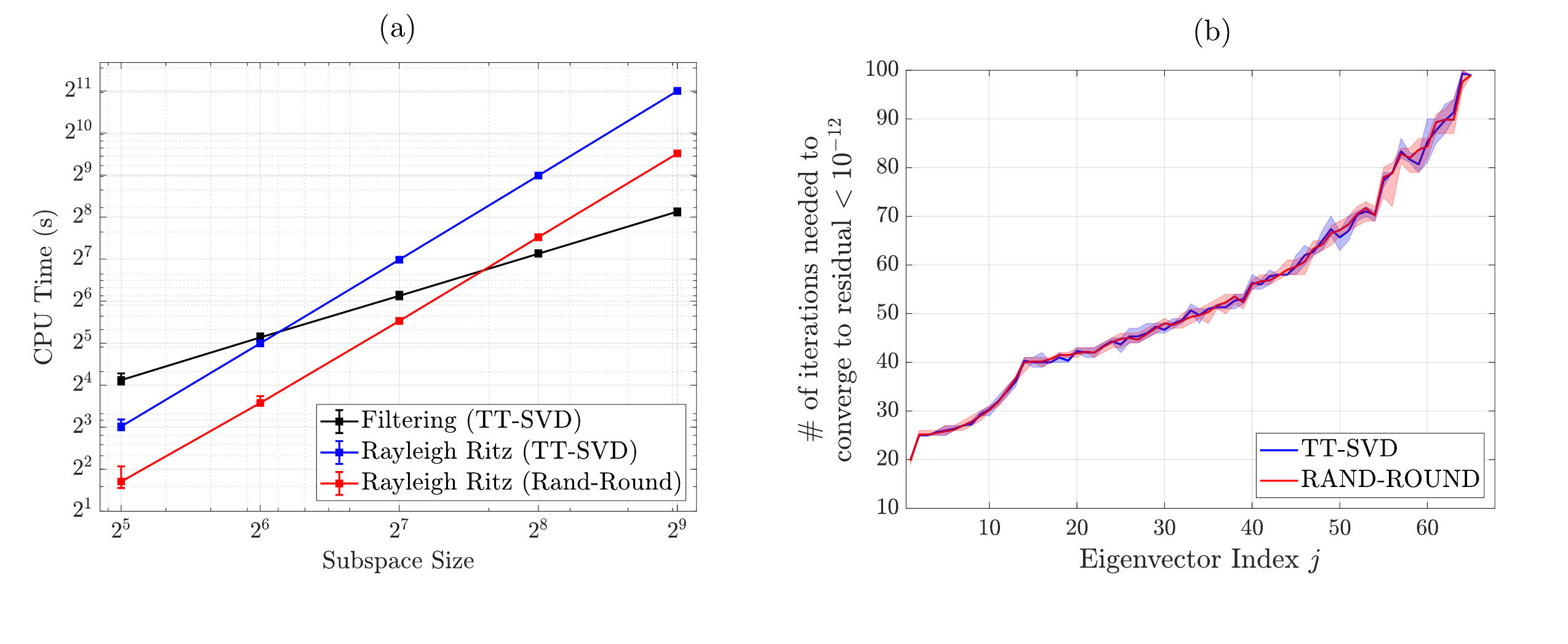}
\caption{Comparison between TT-SVD and randomized truncation methods \cite{Daas2021} for approximating linear combinations in the Rayleigh-Ritz step of subspace iteration. We considered the spin-1/2 Heisenberg chain of length $L=24$, iterating at fixed TT-rank $r = 16$. For both methods, TT-SVD is used for truncations during polynomial filtering. 
(a) Comparison of cpu time. Using of the randomized truncation methods consistently reduces the per-iteration run time of the Rayleigh-Ritz step by more than 2.5x. This speed up is particular important for larger subspaces, for which the Rayleigh-Ritz step requires increasingly more time than polynomial filtering. 
(b) Comparison of convergence. number of iterations required for each eigenvector estimate to converge in residual $\norm{Av_j / \langle v_j,Av_j \rangle - v_j}_F < 10^{-12}$ for a subspace of size $m = 96$. We plot the min, mean, and max over 6 different initial subspaces. Despite the potential for additional error due to randomization, using randomized methods for truncation during the Rayleigh Ritz step prolongs convergence by no more than 3 iterations on average. 
}
\label{fig:ttsvd_vs_rand}
\end{figure}

To demonstrate the computational advantage of using randomized rounding algorithms for the low-rank Rayleigh-Ritz step, we considered the Heisenberg spin-1/2 \eqref{Heis} with $L = 24$, using a polynomial filter of degree $k = 4$ and a maximum TT-rank of $16$. 
Figure \ref{fig:ttsvd_vs_rand}(a) demonstrates the per-iteration speed-up when randomization is used during the Rayleigh-Ritz step.
Truncations during polynomial filtering dominate the run time for smaller subspaces $m$ due to the need to round matrix-vector products. 
The Rayleigh Ritz procedure becomes the primary cost for subspace sizes beyond $m = 128$, and in this parameter regime, randomization improves the time efficiently of the RR step by more than 2.5 times. Although randomization may introduce larger truncation errors than TT-SVD, we find that randomized rounding during the Rayleigh Ritz step has minimal effect on converge rate. 
Figure \ref{fig:ttsvd_vs_rand}(b) shows the number of iterations required for each eigenvector estimate to converge in residual $\norm{Av_j / \langle v_j,Av_j \rangle - v_j}_F < 10^{-12}$ for a subspace of size $m = 96$. We plot the min, mean, and max over 6 different initial subspaces. 
We see considerable overlap between the curves for the Rayleigh-Ritz step with and without randomization (blue vs. red, respectively): on average, the randomized method requires no more than 3 iterations to converge to the same tolerance of the deterministic TT-SVD.
With randomization decreasing the per-iteration cost by more than 2 times, randomized rounding results in better overall time efficiency despite the need for a few extra iterations.

\begin{figure}[t]
\centering
\includegraphics[scale=0.23]{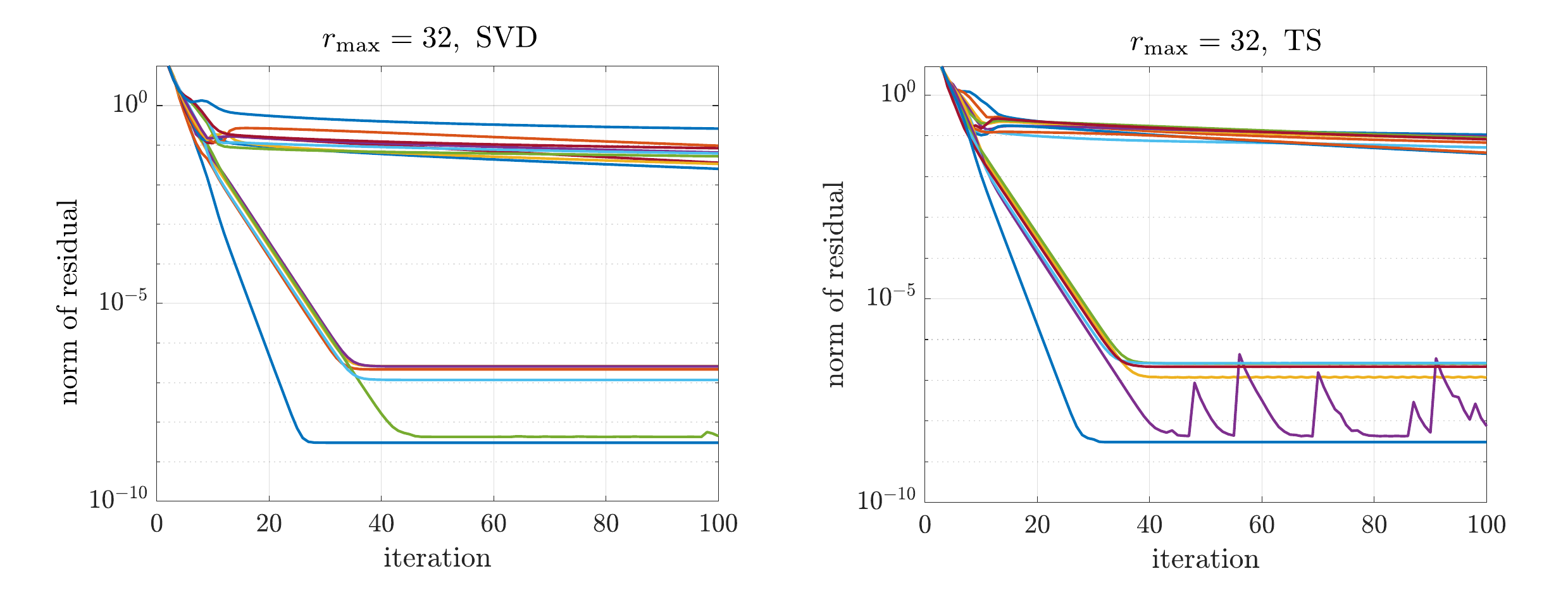}
\caption{
Convergence of rank truncated subspace iteration for computing the first several excited states of the H\'enon-Heiles Hamiltonian \eqref{henon_heiles} with dimension $d=5$ discretized with $n=28$ spectral collocation points per dimension. We used a subspace of dimension $m=15$, Chebyshev polynomial filter of degree $k=6$, and compare convergence for maximum rank $r_{\textrm{max}} = 32$. We show results obtained with two low-rank truncation strategies for approximating matrix-vector products and linear combinations. Truncation performed with rank increase followed by SVD (left) and truncation by approximating matrix-vector products and linear combinations in appropriate tangent spaces of the low-rank manifold (right). The $100$ iterations using TT-SVD truncation took 1896 seconds while $100$ iterations with tangent space projections took 656 seconds, more than 2.5x speed-up with no significant effect on convergence. }
\label{fig:hh_d5}
\end{figure}

To demonstrate the computational advantage of approximating matrix-vector products and linear combinations using orthogonal projection onto TT tangent spaces as described in \cref{sec:tangent_proj} we considered the Hamiltonian \eqref{henon_heiles} as a TT matrix \eqref{eq:tt_matrix} with accuracy $10^{-10}$ yielding TT matrix rank $\bm r_{A} = \begin{bmatrix} 1 & 4 & 4 & 4 & 4 & 1 \end{bmatrix}$. The subspace dimension was set to $m=15$, and a Chebyshev polynomial filter of $k=6$ was used. The method was run with maximum truncation rank $r_{\textrm{max}} = 32$ and different rank truncation strategies. First we used TT-SVD truncation to approximate matrix-vector products and linear combinations of TTs. Such approach is computationally expensive since, e.g., matrix-vector products between $H$ and $v$ result in TTs with ranks equal to the product of the ranks of $H$ and $v$ which are then truncated costing $\mathcal{O}(dnr_v^3r_A^3)$, where $r_v$ and $r_A$ are the entries of the corresponding TT-rank vectors (here assumed to be constant). Then we ran the subspace iteration and approximated matrix-vector products and linear combinations in appropriate tangent spaces (TS) of low-rank TT manifolds. This approach avoids rank increase and is therefore more efficient, e.g., with approximate matrix-vector products between $A$ and $v$ costing $\mathcal{O}(dnr_v^3)$ where $Av$ is approximated in the tangent space of the low-rank manifold at $v$. 
\cref{fig:hh_d5} shows the norm of the residual for each Ritz vector at each iteration. As the maximum truncation rank increases, the final accuracy of the Ritz vectors improves. Notably, different truncation ranks and truncation strategies (SVD or TS) do not significantly affect the convergence rate of the method, even when they influence the final accuracy. These observations highlight the robustness of the subspace iteration convergence rate to rank truncation errors.
The $100$ iterations using TT-SVD truncation took 1896 seconds while $100$ iterations with tangent space projections took 656 seconds, more than 2.5x speed-up.

\section{Conclusions}
\label{sec:conclusions}

We examined low-rank inexact variants of the Lanczos method and a polynomial filtered subspace iteration for tensor eigenvalue problems. These methods typically avoid the issue of getting stuck at local minimizers, which can occasionally occur in DMRG-based algorithms. Furthermore, these methods can be easily used to compute more than one eigenpair of a general matrix, whereas the original DMRG algorithm is designed to compute the lowest eigenvalue (i.e., the ground state) of a Hermitian matrix. 
Our results demonstrated that orthogonal Krylov basis vectors often cannot be accurately represented by low-rank tensor trains, even when the eigenvectors of the associated matrix exhibit low-rank structure. We further showed that the low-rank approximation errors from representing orthogonal Krylov basis vectors resulted in stagnation of convergence in the Lanczos method. 
In contrast, the subspace iteration method employs Ritz vectors as the subspace basis, which directly approximate the low-rank eigenvectors of interest. We provided a convergence analysis quantifying the acceptable truncation error that ensures progress toward convergence at each iteration. Numerical experiments further confirmed that the subspace iteration method exhibits significantly greater robustness to truncation errors compared to the Lanczos method. We provided numerical demonstrations of the convergence analysis and showed that polynomial filtered subspace iteration can be used to compute several dominant eigenpairs with machine precision for problems with up to $100$ dimensions, provided the eigenvectors admit low-rank representations. We also demonstrated that rank-truncated subspace iteration can converge for problems where the density matrix renormalization group method (DMRG). Although the analysis presented here focuses on polynomial filtered subspace iteration, it can be easily extended to other type of subspace iteration such as the one based on the FEAST algorithm~\cite{DMRGFEAST}. 

\section*{Acknowledgments}
This material is based upon work supported by the U.S. Department of Energy, Office of Science, Office of Advanced Scientific Computing Research, Scientific Discovery through Advanced Computing (SciDAC) program through the FASTMath Institute and Partnership with Basic Energy Science under U.S. Department of Energy Contract No. DE-AC02-05CH11231.  This research used resources of the National Energy Research Scientific Computing Center, a DOE Office of Science User Facility supported by the Office of Science of the U.S. Department of Energy under Contract No. DE-AC02-05CH11231 using NERSC award ASCR-ERCAPm1027.

\section*{Code availability}
Source code to reproduce numerical experiments in this paper are available at \url{https://github.com/adektor/TT_subspace_eig}.

\bibliographystyle{siam}
\bibliography{refs}

\begin{appendices}

\section{Lanczos method}
\label{sec:krylov}

Krylov subspace methods are a widely used class of iterative algorithms for computing a few extremal eigenvalues of a matrix $A$. Starting from an initial vector $v_0 \in \mathbb{C}^{n_1 \cdots n_d}$, these algorithms construct an orthonormal basis of the Krylov subspace 
\begin{equation} \label{eq:krylov}
\mathcal{K}(A,v_0;m) = \text{span}\{v_0, Av_0,...,A^{m-1}v_0\}.
\end{equation} 
When the matrix $A$ is Hermitian, an orthonormal basis for the Krylov subspace \eqref{eq:krylov} can be generated by a three-term recurrence known as the Lanczos method. If we use $V_m$ to denote the matrix that contains the orthonormal basis (here we are considering the basis tensors as column vectors in $\mathbb{C}^{n_1\cdots n_d}$), the Lanczos method can be succinctly described by the recurrence relation 
\begin{equation}
A V_m = V_m T_m + fe_m^{\top},
\label{eq:lanfact}
\end{equation}
where $T_m = V_m^{\top} A V_m$ is a tridiagonal matrix representing the projection of $A$ onto the Krylov subspace, and the residual vector $f = (I-V_mV_m^{\top})A v_m$ satisfies $V_m^{\top} f = 0$. The tridiagonal structure of $T_m$ indicates that each column of $V_m$ can be generated via a three-term recurrence 
\begin{equation} \label{eq:3term}
\beta_{j+1} v_{j+1} = A v_j - \alpha_j v_j - \beta_{j}v_{j-1}, 
\end{equation}
where $\alpha_j = \langle v_j, A v_j \rangle$ and $\beta_j = \langle v_{j-1}, A v_j\rangle$. It is well known that using such a three-term recurrence to implement the Lanczos algorithm can result in significant loss of orthogonality among the basis vectors $v_j$ \cite{Simon1984}. To address this, conventional implementations often incorporate full or partial/selective reorthogonalization of the basis vectors. This reorthogonalization mitigates the effects of numerical rounding errors and the resulting loss of orthogonality in the Krylov basis, thereby ensuring the numerical stability and accuracy of the method.

\subsection{Lanczos with rank truncation}
\label{sec:lanczos_lr}

For moderate to large $d$, storing dense representations of the matrix $A$ and Krylov basis vectors becomes impractical. To address this, we consider a low-rank TT variant of the Lanczos method that incorporates truncation into the three-term recurrence \eqref{eq:3term}, ensuring the generated basis vectors $v_j$ and intermediate vectors maintain low-rank TT structure. Truncations are applied after matrix-vector multiplications and additions of TTs. The main steps of this approach are summarized in \cref{alg:TT-Lanczos}. We emphasize that rank truncation following the application of $A$ (lines 1 and 7) prevents the construction of an exact Krylov subspace. Similarly, truncations in the two- and three-term recurrences (lines 3 and 9) further deviate the approximate basis from a true Krylov basis and make it nearly impossible to maintain orthonormality among the basis vectors. Managing both orthogonality and low-rank structure presents significant challenges for the Lanczos method. While rank truncation is essential for preserving computational efficiency and a small memory footprint, it inherently leads to a loss of orthogonality, rendering conventional reorthogonalization techniques unsuitable for low-rank TT variants. 

To address the loss of orthogonality in the low-rank Lanczos algorithm, one can apply the approach from \cite{MPS_Lanczos}, where an orthonormal basis of the approximate Krylov subspace is implicitly generated via Cholesky factorization of the Gram matrix $G = V_m^{\top} V_m$, i.e. $G = R^{\top} R$ where $R \in \mathbb{C}^{m \times m}$. By projecting $A$ onto the subspace spanned by columns of $Q_m = V_mR^{-1}$, one obtains the reduced matrix 
\begin{equation} \label{eq:lanczos_reduced_mat}
\tilde{T}_m = R^{-\top}\left( V_m A V_m \right) R^{-1}, 
\end{equation} 
whose eigenvalues (Ritz values) approximate the eigenvalues of $A$, and whose eigenvectors serve as coefficients for approximate eigenvectors (Ritz vectors) of $A$ relative to the basis $V_m$. While this projection method reduces spurious eigenvalues and enhances stability, it does not address the degradation of subspace quality introduced by rank truncation. 

The core issue of applying Krylov subspace methods with low-rank compression lies in the fact that the orthonormal basis vectors of a Krylov subspace are typically not low-rank. Consequently, rank truncation applied at each iteration to manage tensor ranks introduces significant errors, causing the subspace generated by the low-rank Lanczos process to deviate substantially from the true Krylov subspace. As a result, as demonstrated in \cref{sec:numerics}, approximations of eigenvalues and eigenvectors obtained from projecting onto this subspace fail to exhibit the same convergence properties as those achieved by the standard Lanczos method. 

\begin{algorithm}
\caption{Lanczos iteration with rank truncation}
\label{alg:TT-Lanczos}
\begin{algorithmic}[1]
\Require
\Statex \hspace{1em} $A \in \mathbb{C}^{n_1\cdots n_d \times n_1\cdots n_d}$ in TT-matrix format
\Statex \hspace{1em} $v_0 \in \mathbb{C}^{n_1\cdots n_d}$ TT with $\|v_0\|=1$
\Statex \hspace{1em} $m \in \mathbb{N}$ Number of Lanczos steps
\Statex \hspace{1em} $\bm r \in \mathbb{N}^{d+1}$ TT truncation rank
\Ensure
\Statex \hspace{1em} Approximate Krylov basis vectors $(v_0, v_1, \ldots, v_{m-1})$ in TT format
\vspace{0.5em}
\State $v_1 \gets \mathfrak{T}_{\bm r}(A v_0)$ \Comment{Apply $A$ and truncate}
\State $\alpha_0 \gets \langle v_1, v_0 \rangle$ \Comment{Compute first diagonal element}
\State $v_1 \gets \mathfrak{T}_{\bm r}(v_1 - \alpha_0 v_0)$ \Comment{Orthogonalize and truncate}
\For{$j = 1$ to $m-1$}
    \State $\beta_j \gets \|v_j\|$ \Comment{Compute subdiagonal element}
    \State $v_j \gets v_j / \beta_j$ \Comment{Normalize}
    \State $v_{j+1} \gets \mathfrak{T}_{\bm r}(A v_j)$ \Comment{Apply $A$ and truncate}
    \State $\alpha_j \gets \langle v_{j+1}, v_j \rangle$ \Comment{Compute diagonal element}
    \State $v_{j+1} \gets \mathfrak{T}_{\bm r}(v_{j+1} - \alpha_j v_j - \beta_j v_{j-1})$ \Comment{Orthogonalize and truncate}
\EndFor
\end{algorithmic}
\end{algorithm}


\subsection{Convergence of Lanczos with rank truncation}
In the low-rank Lanczos method (\cref{alg:TT-Lanczos}), truncation is applied after matrix-vector products (steps 1 and 7) and when forming linear combinations of low-rank TTs (steps 3 and 9). The use of these truncations results in the generation of a subspace spanned by the basis
\begin{equation}
V_m = \hat{V}_m + E_m,
\end{equation}
where $\hat{V}_m$ is the orthonormal basis of the Krylov subspace \eqref{eq:krylov} produced from a $m$-step truncation free Lanczos procedure and $E_m$ is a matrix that accounts for all of the truncation errors accumulated in the inexact $m$-step Lanczos iteration. We demonstrate in \cref{sec:numerics} that orthonormal Krylov basis vectors are not low-rank, even when the eigenvectors of the corresponding matrix are low-rank. Consequently, the error matrix $E_m$ is relatively large. 

Projecting $A$ into an orthonormal basis of $V_m$ yields a $m \times m$ projected matrix that can be 
viewed as a perturbation of the tridiagonal matrix $T_m$ in \eqref{eq:lanfact} generated by truncation-free Lanczos iteraion. However, since the perturbation can be large, the eigenvalues of such a projected matrix (the Ritz values relative to the inexact Krylov subspace) can be significantly different from the eigenvalues of $T_m$ (the Ritz values of the Krylov subspace). Moreover, a Ritz vector obtained from the approximate Lanczos procedure cannot be expressed as $f_{m-1}(A)v_0$ for some polynomial $f_{m-1}(\lambda)$ of degree $m-1$. Consequently, the accuracy of the approximation cannot be readily analyzed using the polynomial approximation theory typically employed to study the convergence of the Lanczos algorithm~\cite{parlett}. It is, however, well-known that large errors during the early iterations of the Lanczos method, such as those introduced by rank truncation, can lead to stagnation in convergence~\cite{PAIGE1980235}. This stagnation is observed in our numerical experiments presented in \cref{sec:numerics}. 

Note that the tridiagonal matrix produced by \cref{alg:TT-Lanczos}, with entries $\alpha_j$ and $\beta_j$, is not used to approximate the eigenvalues of $A$. Instead, we project $A$ onto the orthonormal basis ${V}_m$ and perform a Rayleigh-Ritz approximation within this subspace by computing the eigendecomposition of the reduced matrix \eqref{eq:lanczos_reduced_mat}. If a Ritz vector is computed explicitly, additional truncation error is introduced because each Ritz vector is a linear combination of the columns of $V_m$, and such a combination requires truncation to maintain low-rank structure. This error can be avoided by implicitly storing the Ritz vectors. Specifically, this involves storing the basis $V_m$ and the eigenvectors of the projected matrix, which represent the coefficients of the Ritz vectors relative to $V_m$. Quantities that depend on the eigenvectors, such as observables, can then be approximated directly using $V_m$ and the eigenvectors of the projected matrix, eliminating truncation errors introduced during the Rayleigh-Ritz procedure.

\section{Orthogonal projection onto TT tangent space} \label{sec:tangent_proj}
The collection of all TTs with rank-$\r$ 
\begin{equation}
\label{TT-mfld}
\mathcal{M}_{\bm r} = \{v \in \mathbb{C}^{n_1\times \cdots \times n_d}  \mid  \text{TT-rank}(v) = \bm r\},
\end{equation}
forms a smooth submanifold of $\mathbb{C}^{n_1 \times \cdots \times n_d}$ \cite{TT-mfld}. Associated with any $v \in \M_{\r}$ is a tangent space denoted by $T_{v}\M_{\r}$, which is a vector subspace of $\mathbb{C}^{n_1 \times \cdots \times n_d}$. Such tangent spaces, typically used for dynamical low-rank approximation of tensor differential equations \cite{Lubich2015,Haegeman2016}, are a convenient set of vector subspaces in which we can approximate the linear algebra operations needed for inexact subspace projection methods. 
Given the TT representation \eqref{eq:TT_format} of $v$, any element of the tangent space can be written (non-uniquely) as 
\begin{equation}
\label{tangent_element}
    \delta v = \delta C_1 C_2 \cdots C_d + C_1 \delta C_2 C_3 \cdots C_d + \cdots + C_1 \cdots C_{d-1} \delta C_d, 
\end{equation}
where $\delta C_k \in \mathbb{C}^{r_{k-1} \times n_k \times r_k}$ is a first order variations of the TT $C_k$. Approximating a given tensor $z \in \mathbb{C}^{n_1\times \cdots \times n_d}$ in the tangent space $T_v\M_{\r}$ amounts to determining a good set of $\delta C_k$ so that the error $\|z-\delta v\|$ is minimized. When $z$ is a TT, a sum of TTs, or a TT matrix-vector product, the cores $\delta C_k$ that minimizes the Frobenius norm error can be computed efficiently by projecting $z$ orthogonally onto $T_v\M_{\r}$~\cite{Steinlechner2016}. 

\section{Proof of Theorem~\ref{thm:pwr_conv_no_trunc}}
\label{sec:proofpwr}
\begin{proof}
Tangent of the angles $\measuredangle(v,\psi_1)$ and $\measuredangle(Av,\psi_1)$ can be expressed as 
\begin{equation}
\label{eq:angles}
\tan\measuredangle(v,\psi_1) = \frac{\left\|\P_{\perp} v\right\|}{\left\|\P v\right\|} \quad \text{and} \quad \tan\measuredangle(Av,\psi_1) = \frac{\left\|\P_{\perp} Av\right\|}{\left\|\P Av\right\|}. 
\end{equation} 
To derive the bound \eqref{eq:pwr_conv}, we first establish
\begin{equation} \label{eq:projector1_properties}
\left\| \P_{\perp} Av \right\| \leq  |\lambda_2|\left\|\P_{\perp} v\right\|
\quad \text{and} \quad 
\left\| \P Av \right\| = |\lambda_1|\left\|\P v\right\|. 
\end{equation} 
Expanding $v$ in the eigenbasis $v = \displaystyle\sum_{j=1}^{n_1 \cdots n_d} \alpha_j\psi_j$, 
the inequality is obtained from 
\begin{equation}
        \left\|\P_{\perp}A v\right\| = 
         \left\| \sum_{j=2}^{n_1\cdots n_d} \alpha_j \lambda_j \psi_j \right\| \\
        \leq |\lambda_2| \left\| \sum_{j=2}^{n_1\cdots n_d} \alpha_j \psi_j \right\| \\
        = |\lambda_2| \left\| \P_{\perp} v \right\|, 
\end{equation}
where the inequality is due to $|\lambda_2|\geq|\lambda_j|$ for all $j \geq 2$. The equality in \eqref{eq:projector1_properties} is obtained similarly. 
Combining \eqref{eq:angles} and \eqref{eq:projector1_properties} we have 
\begin{equation} \label{eq:power_ratio1}
    \tan\measuredangle(Av,\psi_1) 
    = \frac{\left\|\P_{\perp} Av\right\|}{\left\|\P Av\right\|} 
    \leq \frac{|\lambda_2|\left\|\P_{\perp} v\right\|}{|\lambda_1| \left\|\P v\right\|} \\
    = \left|\frac{\lambda_2} {\lambda_1} \right|\tan\measuredangle(v,\psi_1), 
\end{equation}
completing the proof. 
\end{proof}

\section{Proof of Theorem~\ref{thm:sub_conv}}
\label{sec:proofsub}
\begin{proof}
Using the eigendecomposition $A = \Psi \Lambda \Psi^{\top}$, we write $T_{AV}$ as  
\begin{equation} \label{eq:TAV}
\begin{aligned}
T_{AV} &= \Psi_{>m}^{\top} AV \left(\Psi_{\leq m}^{\top} AV\right)^{-1} \\
&= \Psi_{>m}^{\top} \Psi \Lambda \Psi^{\top} V \left(\Psi_{\leq m}^{\top} \Psi \Lambda \Psi^{\top} V\right)^{-1} \\
&= \Lambda_{>m} \Psi_{>m}^{\top} V \left(\Psi_{\leq m}^{\top} V\right)^{-1} \Lambda_{\leq m}^{-1} \\
&= \Lambda_{>m} T_V \Lambda_{\leq m}^{-1}, 
\end{aligned}
\end{equation}
where $\Lambda_{>m} = \text{diag}(\lambda_{m+1},\ldots,\lambda_{N})$ and $\Lambda_{\leq m} = \text{diag}(\lambda_{1},\ldots,\lambda_{m})$. 
According to \eqref{eq:tan_princ_angle}, principal angle of interest is the largest singular value of $T_{AV}$, which we bound using \eqref{eq:TAV} as 
\begin{equation} \label{eq:p_angle_dec}
\begin{aligned}
\tan \Theta(\R(AV),\R(\Psi_{\leq m})) &= \sigma_1(T_{AV})\\ 
&= \|T_{AV}\|_2 \\
&= \left\| \Lambda_{>m} T_V \Lambda_{\leq m}^{-1} \right\|_2 \\
&\leq \left\| \Lambda_{>m} \right\|_2 \left\| T_V \right\|_2 \left\|\Lambda_{\leq m}^{-1} \right\|_2 \\
&= \left|\frac{\lambda_{m+1}}{\lambda_m}\right| \left\|T_V\right\|_2 \\
&= \left|\frac{\lambda_{m+1}}{\lambda_m}\right| \tan \Theta(\R(V),\R(\Psi_{\leq m})),
\end{aligned}
\end{equation}
completing the proof. 
\end{proof}

\end{appendices}

\end{document}